\newtheorem{theorem}{Theorem}[section]
\newtheorem{corollary}[theorem]{Corollary}
\newtheorem{proposition}[theorem]{Proposition}
\newtheorem{definition}[theorem]{Definition}
\theoremstyle{remark}
\newtheorem{remark}[theorem]{Remark}
\numberwithin{equation}{section}
\newcommand{\abs}[1]{\left\lvert#1\right\rvert}
\newcommand{\set}[1]{\left\{#1\right\}}
\newcommand{\pr}{\mathbf P}
\newcommand{\Qpr}{\mathbf Q}
\newcommand{\ex}{\mathbf E}
\newcommand{\ind}{\mathds 1}
\newcommand{\FF}{\mathcal F}
\newcommand{\real}{\mathbb R}
\begin{document}
\title[CIR and squared Bessel processes]{Driven by Brownian motion Cox--Ingersoll--Ross and squared Bessel processes: interaction and phase transition}
\author{Yuliya Mishura$^1$, Kostiantyn Ralchenko$^{1,2}$, Svitlana Kushnirenko$^1$}
\address{$^1$Taras Shevchenko National University of Kyiv\\
$^2$University of Vaasa}
%\date{\today}
\begin{abstract}
This paper studies two related stochastic processes driven by Brownian motion: the Cox--Ingersoll--Ross (CIR) process and the Bessel process. We investigate their shared and distinct properties, focusing on time-asymptotic growth rates, distance between the processes in integral norms, and parameter estimation. The squared Bessel process is shown to be a phase transition of the CIR process and can be approximated by a sequence of CIR processes. Differences in stochastic stability are also highlighted, with the Bessel process displaying instability, while the CIR process remains ergodic and stable.
\end{abstract}
\maketitle

\section{Introduction}
\subsection{Some historical information} Probability theory (stochastics), which deals with the description and analysis of stochastic objects, is connected with the most diverse phenomena, manifestations of the surrounding reality and  with technical phenomena. Stochastics creates  theoretical basis of applied applications, that is,  adequate models of existing phenomena. At the same time, the created model should allow a simple but flexible image with the help of certain mathematical formulas. Once a good model is created, further development proceeds in two directions: first, the theory of related and more general models itself begins to develop, and second, the model begins to cover more and more applications, since approximately the same regularities are manifested   in physics,   financial mathematics,  biology,   economics,  climatology, and even in some social sciences. Therefore, the same model or any of its variants can be used in a wide variety of areas. Furthermore, as it is well-known, Brownian motion (Wiener process) is the best possibility to involve randomness into the model. 

In 1827, the botanist Robert Brown (1773--1858) first observed the phenomenon, which was later called Brownian motion. A brief description of the observed movement is as follows: imagine a laboratory dish, something like a cup, in which there is a liquid, and in the liquid is poured flower pollen. So, Brown observed the movement of pollen particles in this cup. The phenomenon so impressed him that he described it in detail. Namely, according to observations, the specified movement was chaotic, the trajectories of the particles constantly changed direction, were broken, in fact, at no moment in time did any particle have a fixed direction of movement. Robert Brown could not create a certain simple, non-random description of this system, but only noted that the movement of pollen in a liquid is described by a very chaotic process. From a modern point of view, he actually observed a random process that took on a value of a small part of the plane (the surface of the cup) and that changed in time non-deterministically. At that time, there were no explanations for this phenomenon. They appeared much later.

The next step in the development of the theory of Brownian motion was made by outstanding physicists. Namely, in 1905--1906, the famous works of Albert Einstein (1879--1955) and Marian Smoluchowski (1872--1917), \cite{eins} and \cite{smol1}, were published independently of each other; then their original German papers on the specified topic, in which the phenomenon of Brownian motion was explained, were combined into a collection \cite{einsmol}. English translations of Einstein's papers were published in \cite{eins-eng}.
Einstein and Smoluchowski explained, in particular, the random movement of flower pollen in a liquid by such a phenomenon as the thermal chaotic movement of atoms and molecules. 

According to this theory, liquid or gas molecules are in constant thermal motion, and the impulses of different molecules are unequal in magnitude and direction. If the surface of a particle placed in such an environment is small, as is the case for a Brownian particle, in particular, a particle of flower pollen, then the shocks felt by the particle from the surrounding molecules will not be precisely compensated. Therefore, as a result of ``bombardment'' by molecules, the Brownian particle begins to move randomly, changing the magnitude and direction of its velocity approximately $10^{14}$ times per second. This is how a seemingly purely biological phenomenon was explained from a physical point of view. 

It also turned out that the random process $B_t$ is needed to describe this thermal motion and to create this kinetic theory, i.e.\ Brownian motion is an adequate model for this kinetic theory. Einstein and Smoluchowski not only described this random process from the point of view of its growth and from the point of view of its dependence on cases, but they found the so-called distribution of process values  and formulated a partial differential equation that is satisfied by the   transition density of the distribution of Brownian motion. It turned out that Brownian motion has a Markov property. This was written using so-called transition probabilities, or rather using partial derivative equations for densities, that is, derivatives of transition probabilities. These equations showed that Brownian motion has a Markov property: independence of the future from the past with a fixed current. This property significantly helps to simplify at least the formulas related to transition probabilities. 

Thus, the theory of Brownian motion found a serious reinforcement from physicists and became ``overgrown'' with interesting properties. However, the real world cannot always be described by a linear model, it is much more complex. Therefore, integration over Brownian motion was constructed first with non-random (N. Wiener), and then with random (K. It\^{o}) integrands, and the theory of stochastic differential equations was developed. 

\subsection{Overview of the results presented in the paper}
In the present paper we shall consider two closely related stochastic processes, namely, Cox--Ingersoll--Ross and Bessel process, both of them being strictly positive solutions of the respective stochastic differential equations. Strictly positive values   make  them convenient to model real processes in physics, biology, economics. In finances they are used  to forecast interest rates and in bond pricing models, see e.g.\ \cite{Brigo,DiFrancesco22,Maghsoodi96,Orlando19}. Similar models are used to simulate changes in the membrane voltage of a neuron  \cite{neuron}.
In our research we combine the methods of stochastic analysis and methods based on the explicit formulas for probability distributions of CIR and Bessel processes. 

In a certain sense, the square of the Bessel process can be considered the result of a phase transition in the Cox--Ingersoll--Ross  process. We underline their common and distinct properties. More precisely, we begin by presenting several results that provide upper and lower bounds for the time-asymptotic growth rates of both processes. These bounds exhibit notable similarities between the two models.

Next, we explore the approximation of CIR and squared Bessel processes by a sequence of CIR processes. We prove the convergence of this sequence in integral norms, assuming that the corresponding coefficients converge. Additionally, we establish upper bounds on the rate of convergence. It turns out that the CIR and squared Bessel processes are closely related, as the squared Bessel process can be represented as the limit of a sequence of CIR processes. However, as anticipated, the upper bounds involve coefficients that depend on the length of the time interval and tend to infinity as the interval length increases. In this sense, the processes diverge, or, in other words, they move apart. Nevertheless, the coefficients can be sufficiently close such that, over slowly increasing time intervals, the processes remain comparable.

We then apply this approximation to the problem of parameter estimation for the squared Bessel process using the maximum likelihood method. To establish the strong consistency of the constructed drift parameter estimator, we approximate the squared Bessel process by a sequence of CIR processes, for which the necessary convergence can be derived from their ergodic properties. Furthermore, we show how to estimate the diffusion coefficient of the process based on the realized quadratic variations.

Finally, we investigate both processes using the concept of stochastic instability. From this perspective, the properties of the squared Bessel and CIR processes are fundamentally different. We demonstrate that the squared Bessel process exhibits stochastic instability, whereas the CIR process is ergodic and, in this sense, stochastically stable. In addition, we consider an alternative sequence of approximations for the squared Bessel process and show that these approximating processes are also stochastically unstable. Moreover, we prove that, when appropriately normalized, this sequence converges to the (non-squared) Bessel process.

\subsection{Structure of the paper}
The remainder of the paper is organized as follows. In Section~\ref{sec:prelim}, we introduce the CIR and squared Bessel processes as unique solutions to the corresponding stochastic differential equations. We also provide preliminary information on their distributional and pathwise properties that are essential for the subsequent sections. In particular, this section presents formulas for their densities and the first three moments.

Section~\ref{sec:growth} contains several results that describe the growth rates of the CIR and squared Bessel processes as functions of time and their coefficients. Section~\ref{sec:distance} focuses on the distance between the CIR and squared Bessel processes in terms of integral norms, expressed through their coefficients. We establish the rate of convergence of CIR processes to either CIR or squared Bessel processes in integral norms over any fixed interval, under the condition that the respective coefficients converge.

Section~\ref{sec:statistics} addresses the statistical problem of identifying the Bessel process from continuous-time observations of its trajectory. Finally, in Section~\ref{sec:instability}, we demonstrate that the squared Bessel process is stochastically unstable, in contrast to the CIR process. This section also presents some functional limit theorems.

For the reader's convenience, several auxiliary results used in the proofs are provided in the appendix. Specifically, we include definitions and properties of special functions, a limit theorem for stochastic differential equations with non-Lipschitz diffusion terms, as well as additional limit theorems for solutions of stochastic differential equations.

\section{Preliminaries}
\label{sec:prelim}
Let $(\Omega, \FF, \pr)$ be a probability space and $W = \set{W_t, t \ge 0}$ be a Wiener process on it. In this paper we study two stochastic differential equations, namely
\begin{equation}\label{CIR}
X_t = x_0 + \int_0^t (a - b X_s)\,ds + \sigma \int_0^t \sqrt{X_s}\,dW_s,
\end{equation}
and
\begin{equation}\label{BES}
Y_t = y_0 + a t + \sigma \int_0^t \sqrt{Y_s}\,dW_s,
\end{equation}
where $x_0 > 0$, $y_0 > 0$, $a > 0$, $b \ge 0$, and $\sigma > 0$.

It is well known that both equations \eqref{CIR} and \eqref{BES} admit unique non-negative strong solutions,  $X = \{X_t, t \ge 0\}$ and $Y = \{Y_t, t \ge 0\}$, respectively.

The process $X = \{X_t, t \ge 0\}$ was introduced in \cite{CIR85} for the purpose of interest rates modeling. It is commonly referred to as the Cox--Ingersoll--Ross (CIR) process.
The process $Y = \{Y_t, t \ge 0\}$ is the squared Bessel process, see, e.g., \cite{GJY03} or \cite[Chapter XI]{RevuzYor} for details.

It follows from the comparison theorem \cite[Proposition 2.18, p.~293]{KaratzasShreve} that if $x_0 \le y_0$, then
\[
\pr\left(X_t \le Y_t \text{ for all } t\ge0\right) = 1.
\]

In what follows we additionally assume that
\[
2a \ge \sigma^2.
\]
In this case, the trajectories of  both  processes $X$ and $Y$ with probability 
 1 remain  strictly positive, while in the case $0 < a < \sigma^2/2$, they  almost surely hit  zero, where the state 0 is instantaneously reflecting (see, e.g., classical paper \cite{GJY03} and the more recent ones \cite{MiYu2, MIYu1} for more details).

\subsection{Distributional and path-wise properties of CIR process}
It is well known \cite{CIR85} that $X_t$ follows a non-central chi-squared distribution with the following probability density function:
\[
p_t(x) = \frac{1}{c(t)} \left(\frac{x}{x_0 e^{-bt}}\right)^{\nu/2} \exp\set{- \frac{x + x_0e^{-bt}}{c(t)}}
I_\nu\left(\frac{2 e^{-bt/2} \sqrt{x x_0}}{c(t)} \right) \ind_{x>0},
\]
where
\[
c(t) = \frac{\sigma^2}{2b}\left(1 - e^{-bt}\right),
\quad
\nu = \frac{2a}{\sigma^2} - 1,
\]
and $I_\nu$ is the modified Bessel function of the first kind.
For $\nu>-1$ and $x\in\real$ this function is defined by the following power series \cite[Formula~50:6:1]{An-Atlas-of-Functions}:
\[
I_\nu(x) = \sum_{j=0}^\infty \frac{(x/2)^{2j+\nu}}{j!\Gamma(j+1+\nu)},
\]
where $\Gamma$ stands for the Gamma function.
Obviously, $I_\nu(0) = 0$ for all $\nu>1$, more precisely $I_\nu$ has the following behavior as $x \to 0$:
\begin{equation*}%\label{Inear0}
I_\nu(x) \sim \frac{(x/2)^{\nu}}{\Gamma(\nu+1)}.
\end{equation*}
Using this relation, one can show that
\begin{equation}\label{eq:gammadens}
p_t(x) \to \frac{(2b/\sigma^2)^{2a/\sigma^2}}{\Gamma(2a/\sigma^2)} \, x^{2a/\sigma^2 - 1}e^{- 2bx/\sigma^2}\ind_{x>0} 
\eqqcolon p_\infty(x),
\quad t \to \infty.
\end{equation}
Note that the limiting distribution is a Gamma distribution.

Moreover, the CIR process $X$ is ergodic \cite{CIR85} (see also \cite[Section 1.2]{Alfonsi15} and \cite{DMR22}).
Ergodicity implies that for any function $f\in L^1(\real,p_\infty(x)dx)$,
the time average
$\frac{1}{T}\int_0^Tf(X_t)dt$
converges a.s.\ to the space average
$\int_{\real} f(x)p_\infty(x)dx$,
as $T\to\infty$.
In particular, for $a > \frac{\sigma^2}{2}$,
\begin{equation}\label{ergo-2}
\frac{1}{T}\int_0^T \frac{dt}{X_t} \rightarrow \int_{\mathbb{R}} \frac{p_\infty(x)}{x}\,dx
= \frac{b}{a-\sigma^2/2},\quad\text{a.\,s., when } T\rightarrow\infty.
\end{equation}

The first two moments of $X_t$ are equal to
\begin{equation}\label{eq:CIR-mean}
\ex X_t = x_0 e^{-bt} + \frac{a}{b}\left(1 - e^{-b t}\right),
\end{equation}
and 
\begin{align}\ex X_t^2 = \frac{x_0 (\sigma^2 + 2 a)}{b} \left(e^{-bt} - e^{-2bt}\right) 
+ \frac{a(\sigma^2 + 2a)}{2b^2}\left(1 - e^{-b t}\right)^2
+ x_0^2 e^{-2bt}.
\label{eq:CIR-var}
\end{align}

The formula for higher moments of the CIR processes is presented in \cite[Proposition 1]{Okhrin23}. In particular,
\begin{align*}
\ex X_t^3 &= x_0^3 e^{-3bt} + \left(1 + \frac{3\sigma^2}{2a} + \frac{\sigma^4}{2a^2} \right) \left(\frac{a^3}{b^3} \left(1 - e^{-bt}\right)^3
+ \frac{3 x_0 a^2}{b^2}  \left(e^{-bt} - 2 e^{-2bt} + e^{-3bt}\right)\right)
\\
&\quad+  \frac{3 x_0^2 a}{b} \left(1 + \frac{\sigma^2}{a}\right)
\left(e^{-2bt} - e^{-3bt}\right).
\end{align*}

\subsection{Distributional and path-wise properties of squared Bessel process}
The probability density function of the squared Bessel process $Y_t$ is given by 
\[
g_t(x) = \frac12 \left(\frac{x}{y_0}\right)^{\nu/2}
\exp\set{-\frac{2(x+y_0)}{\sigma^2 t}} I_{\nu} \left(\frac{4 \sqrt{x y_0}}{\sigma^2 t}\right) \ind_{x>0},
\]
where, as before, $\nu = \frac{2a}{\sigma^2} - 1$ (see, e.g., \cite[Chapter XI, Corollary (1.4)]{RevuzYor}).

Unlike the CIR process $X$, the squared Bessel process $Y$ is non-ergodic. For a detailed discussion on the properties of squared Bessel processes, we refer the reader to \cite[Chapter XI]{RevuzYor}. A comparison of the properties of both ergodic and non-ergodic processes, $X$ and $Y$, can be found in \cite{BAK2013}.

Since $I_\nu(0) = 0$, we see that
\begin{equation}\label{denstozero}
g_t(x) \to 0, \quad t \to \infty.
\end{equation}
Therefore, for the squared Bessel process, the limiting distribution does not exist.

The first and second moments of $Y_t$ are equal to:
\begin{equation}\label{eq:Bessel-moments}
\ex Y_t = y_0 + a t,
\quad
\ex Y_t^2 = y_0^2 + \left(\frac{\sigma^2}{2} + a\right) \left(2y_0 t + a t^2\right).
\end{equation}
Both equalities can be derived directly from the equation \eqref{BES}, taking into account that the stochastic integral $\int_0^t \sqrt{Y_s}\,dW_s$ is a square-integrable martingale with zero mean whose second moment equals $\int_0^t \ex Y_s\,ds$.

\begin{remark}
We see from \eqref{eq:CIR-mean} and \eqref{eq:CIR-var} that the first two moments of the CIR process exist for all $t$.
Moreover, they are totally bounded. Indeed,  as it was established  in \cite[Proposition 3]{BAK2013},
$\sup_{t\ge 0} \ex X_t^p < \infty$ for any $p > -2a/\sigma^2$.
In contrast, the first and second moments of the squared Bessel process exhibit linear and quadratic growth with respect to $t$, respectively.
\end{remark}

The general formula for the moments of the Bessel process has the following form: for any $p \ge -\frac{2a}{\sigma^2}$
\[
\ex Y_t^p = \left(\frac{\sigma^2 t}{2}\right)^p \frac{\Gamma\left(\frac{2a}{\sigma^2} + p\right)}{\Gamma\left(\frac{2a}{\sigma^2}\right)}
\exp\set{-\frac{2y_0}{\sigma^2 t}} {}_1F_1 \left(\frac{2a}{\sigma^2} + p, \frac{2a}{\sigma^2},\frac{2y_0}{\sigma^2 t}\right),
\]
see the proof of Proposition 3 in \cite{BAK2013}. Here ${}_1F_1$ is the confluent hypergeometric function, see Appendix~\ref{app:special}.
Using formula \eqref{1F_1(c+3)} in Appendix, we can derive for $p=3$
\begin{equation}\label{BESmom3}
\ex Y_t^3 = \left(\frac{a\sigma^4}{2} + \frac{3a^2\sigma^2}{2} + a^3\right) t^3
+ 3\left(\frac{y_0 \sigma^4}{2} + \frac{3 a y_0\sigma^2}{2} + a^2y_0\right) t^2
+ 3 y_0^2\left(\sigma^2 + a\right) t + y_0^3.
\end{equation}

\section{Time-asymptotic growth rate for CIR and quadratic Bessel processes}
\label{sec:growth}

Now we establish   several  results that provide  a growth rate for the solution to  equations \eqref{BES} and \eqref{CIR}, as the function of   time and coefficients. As usual,  time is included in the constants, since the time interval is fixed in many situations, but for us it is the asymptotic behaviour of functionals of solutions that is most important. We demonstrate what growth rates can be obtained by different methods, and compare the results. The first  result follows from the Gr\"{o}nwall inequality therefore gives an exponential  growth rate. This growth rate is determined by   coefficients $a$, $x_0$ and $ \sigma$ and is valid both for solution to \eqref{CIR} or \eqref{BES}.

\begin{proposition}
      \label{l:CIR-growth}
Let $Z = \{Z_t, t \ge 0\}$ be a unique solution to \eqref{CIR} or \eqref{BES} (i.e., $Z = X$ or $Z = Y$). Then, for all $t \ge 0$,
\begin{equation}\label{CIR-growth1}
\ex\left(\sup_{s \le t} Z_s\right)^2
\le 2\left(\left(x_0 + a t\right)^2 + 2\sigma^2 t\right) e^{4\sigma^2 t}.
\end{equation}
\end{proposition}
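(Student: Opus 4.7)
The plan is to reduce both equations to a common stochastic inequality and then close the argument with Doob's maximal inequality and Gronwall's lemma. The key observation is that since $b\ge 0$ and $Z_s\ge 0$, the drift in \eqref{CIR} satisfies $a-bZ_s\le a$ pointwise, while in \eqref{BES} the drift equals $a$ identically. Consequently, in both cases
\[
Z_t \le x_0 + at + \sigma M_t, \qquad M_t \coloneqq \int_0^t \sqrt{Z_s}\,dW_s,
\]
and, because $s\mapsto x_0+as$ is nondecreasing, $\sup_{s\le t}Z_s \le (x_0+at) + \sigma\sup_{s\le t}\abs{M_s}$.

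Next I would square using $(u+v)^2\le 2u^2+2v^2$, take expectations, and apply Doob's $L^2$ maximal inequality together with the Ito isometry, obtaining
\[
\ex\sup_{s\le t} M_s^2 \le 4\,\ex M_t^2 = 4\int_0^t \ex Z_s\,ds.
\]
Writing $\phi(t)=\ex\sup_{s\le t}Z_s^2$, this gives a preliminary bound of the form $\phi(t)\le A(x_0,a,\sigma,t) + C\sigma^2\int_0^t\ex Z_s\,ds$. To close the loop into a Gronwall inequality I would control $\ex Z_s$ by $\phi(s)$ using the elementary estimate $Z_s\le 1+Z_s^2$ (valid since $Z_s\ge 0$), which gives $\ex Z_s\le 1+\phi(s)$. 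The resulting linear inequality $\phi(t)\le \tilde A(t) + \tilde C\int_0^t\phi(s)\,ds$ with nondecreasing $\tilde A$ then yields by Gronwall's lemma an exponential estimate of the type asserted in \eqref{CIR-growth1}.

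The main subtlety, and the step where one can easily lose the exponential rate, is the passage from $\int_0^t\ex Z_s\,ds$ to an inequality that is \emph{linear} in $\phi$. The naive Cauchy--Schwarz estimate $\ex Z_s\le \sqrt{\ex Z_s^2}\le \sqrt{\phi(s)}$ would produce a Bihari-type inequality $\phi(t)\le A(t)+C\int_0^t\sqrt{\phi(s)}\,ds$, whose solution grows only polynomially in $t$ and therefore fails to reproduce the claimed exponential rate $e^{4\sigma^2 t}$. The alternative $Z_s\le 1+Z_s^2$ keeps the dependence linear at the negligible cost of an extra additive $\sigma^2 t$ term. A secondary technical point is that the isometry and Doob inequality require $M$ to be a genuine square-integrable martingale; this is handled by the standard localization via $\tau_n=\inf\set{s:Z_s\ge n}$, running the estimate on $[0,t\wedge\tau_n]$ and passing to the limit by monotone convergence and Fatou's lemma.
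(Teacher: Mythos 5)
Your proposal is correct and follows essentially the same route as the paper: drop the nonpositive drift term $-b\int_0^s Z_u\,du$, localize with $\tau_N$, apply Doob's $L^2$ maximal inequality to the stopped stochastic integral, linearize via the elementary bound on $Z_s$ by $Z_s^2$, and close with Gr\"onwall — including your correct identification of the key point that one must avoid the Cauchy--Schwarz route, which would only give a Bihari-type inequality. The only cosmetic difference is that to land exactly on the stated constants $2\sigma^2 t$ and $e^{4\sigma^2 t}$ you should use the sharper form $Z_s \le \tfrac12\left(1+Z_s^2\right)$ rather than $Z_s \le 1+Z_s^2$, which as written would yield $e^{8\sigma^2 t}$.
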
 

\begin{proof}
Consider the equation \eqref{CIR}, i.e., $Z=X$.
Define 
\[
\tau_N \coloneqq \inf\{t \ge 0 : X_t \ge N\}, \quad N \ge 1.
\]
Then
\begin{align}
0\le X_{t \wedge \tau_N} &= x_0 + a (t \wedge \tau_N) - b \int_0^{t \wedge \tau_N} X_u\,du + \sigma \int_0^{t \wedge \tau_N} \sqrt{X_u}\,dW_u
\notag\\
&\le x_0 + a (t \wedge \tau_N) + \sigma \int_0^{t \wedge \tau_N} \sqrt{X_u}\,dW_u,
\label{CIR4terms}
\end{align}
which implies
\begin{equation}\label{CIR2-4terms}
\ex\left(\sup_{s \le t} X_{s \wedge \tau_N}\right)^2
\le 2\left(\left(x_0 + a t\right)^2 + \sigma^2 \ex \sup_{s \le t}  \left(\int_0^{s \wedge \tau_N} \sqrt{X_u}\,dW_u\right)^2 \right).
\end{equation}
% Using the Cauchy--Schwarz inequality, we obtain
% \begin{equation}\label{CIR2-term3}
% \left(\int_0^{t \wedge \tau_N} \sup_{u \le s} X_u\,ds\right)^2
% \le t \int_0^t \left(\sup_{u \le s} X_{u \wedge \tau_N}\right)^2 \,ds.
% \end{equation}
Since
\[
\ex \left(\int_0^{t \wedge \tau_N} \sqrt{X_s}\,dW_s\right)^2
= \ex \left(\int_0^t \ind_{s \le \tau_N} \sqrt{X_s}\,dW_s\right)^2
= \int_0^t \ex \left(\ind_{s \le \tau_N} \sqrt{X_s}\right)^2 ds
\le N t,
\]
we see that $\int_0^{t \wedge \tau_N} \sqrt{X_s}\,dW_s$ is a square-integrable martingale. Therefore, by the Doob maximal quadratic inequality,
\begin{align}
\ex \sup_{s \le t} \left(\int_0^{s \wedge \tau_N} \sqrt{X_u}\,dW_u\right)^2
&\le 4 \ex \int_0^{t} X_{s\wedge \tau_N}\,ds 
\le 2 \ex \int_0^{t} \left(1 + X_{s\wedge \tau_N}^2\right)\,ds \notag \\
&\le 2t + 2 \int_0^{t} \ex\left(\sup_{u \le s} X_{u \wedge \tau_N}\right)^2\,ds.
\label{CIR2-term4}
\end{align}
Combining \eqref{CIR2-4terms}--\eqref{CIR2-term4}, we derive the inequality
\[
\ex\left(\sup_{s \le t} X_{s \wedge \tau_N}\right)^2
\le 2\left(\left(x_0 + a t\right)^2 +  2\sigma^2 t + 2\sigma^2 \int_0^{t} \ex\left(\sup_{u \le s}X_{u \wedge \tau_N}\right)^2\,ds \right).
\]
By the Gr\"{o}nwall  inequality,
\[
\ex\left(\sup_{s \le t} X_{s \wedge \tau_N}\right)^2
\le 2\left(\left(x_0 + a t\right)^2 +  2\sigma^2 t\right) e^{4\sigma^2 t}.
\]
Letting $N \to \infty$ concludes the proof for $X$.
The equation \eqref{BES} is considered similarly.
\end{proof}

\begin{remark}
Similarly to the above proof, one can establish that for the solution $X = \{X_t, t \ge 0\}$ of the equation \eqref{CIR}, the following inequality holds:
\[
\ex \sup_{s\le t} \left(X_s + b \int_0^s X_u\, du \right)^2
\le 2\left(\left(x_0 + a t\right)^2 +  2\sigma^2 t\right) e^{4\sigma^2 t}.
\]
The disadvantage of these upper bounds is that being exponential, they grow quickly in time $t$, but their advantage is that they do not depend on coefficient $b$. 
\end{remark}
Now our goal is to obtain the upper bounds for $\ex \sup_{s\le t} \left(X_s + b \int_0^s X_u\, du \right)^2$ that will grow not so quickly.  We will not apply the Gr\"{o}nwall  inequality that always gives exponential growth, but apply  directly the values of the moments of CIR-process. Oppositely to previous bounds, the next ones will depend on $b$ and the next goal will be to analyze this dependence when $b\downarrow 0.$ 
\begin{proposition}\label{l:CIR-growth2}
Let $X = \{X_t, t \ge 0\}$ be a solution to \eqref{CIR}. Then, for all $t \ge 0$,
\begin{equation}\label{CIR-bound2}
\ex \sup_{s\le t} \left(X_s + b \int_0^s X_u\, du \right)^2
\le 2(x_0 + a t)^2 +  \frac{8\sigma^2}{b} \left(x_0 -\frac{a}{b}\right)\left(1 - e^{-b t}\right) + \frac{8\sigma^2 a}{b} t.
\end{equation}
\end{proposition}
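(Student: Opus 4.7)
The plan is to exploit the fact that the process $X_s + b\int_0^s X_u\,du$ equals, by \eqref{CIR} itself, the deterministic drift plus a martingale, thereby avoiding Gr\"onwall entirely and replacing the exponential blow-up by bounds in terms of $\ex X_s$, which is known explicitly from \eqref{eq:CIR-mean}.

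Concretely, I would first rewrite \eqref{CIR} in the form
\[
X_t + b\int_0^t X_u\,du = x_0 + at + M_t, \qquad M_t \coloneqq \sigma\int_0^t \sqrt{X_u}\,dW_u.
\]
Using $(\alpha+\beta)^2 \le 2\alpha^2 + 2\beta^2$ and taking the supremum over $s\le t$ gives
\[
\sup_{s\le t}\!\left(X_s + b\int_0^s X_u\,du\right)^{\!2} \le 2(x_0+at)^2 + 2\sup_{s\le t} M_s^2.
\]
Then I would verify that $M$ is a genuine square-integrable martingale: a localization argument with $\tau_N = \inf\{t : X_t \ge N\}$ identical to that in Proposition~\ref{l:CIR-growth} shows $M_{\cdot\wedge\tau_N}$ is an $L^2$-martingale, and since $\ex X_s$ is bounded on $[0,t]$ by \eqref{eq:CIR-mean}, we can pass to the limit $N\to\infty$.

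Next I would apply Doob's $L^2$ maximal inequality and It\^o's isometry:
\[
\ex\sup_{s\le t} M_s^2 \le 4\,\ex M_t^2 = 4\sigma^2 \int_0^t \ex X_s\,ds.
\]
Substituting \eqref{eq:CIR-mean}, the remaining calculation is a routine integration:
\[
\int_0^t \ex X_s\,ds = \int_0^t \left(x_0 e^{-bs} + \frac{a}{b}(1-e^{-bs})\right)ds = \frac{1}{b}\left(x_0 - \frac{a}{b}\right)(1 - e^{-bt}) + \frac{a}{b}t.
\]
Multiplying by $8\sigma^2$ and adding $2(x_0+at)^2$ yields exactly the right-hand side of \eqref{CIR-bound2}.

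I do not foresee a serious obstacle here; the only subtle point is the justification that $M$ is a true (not merely local) square-integrable martingale, which is where one uses the localization as in the previous proof together with the finiteness of $\ex X_s$. The whole improvement over Proposition~\ref{l:CIR-growth} stems from the observation that the drift $-b\int_0^t X_u\,du$ is moved to the left-hand side, so the positivity of $X$ combined with Doob's inequality suffices and Gr\"onwall is unnecessary.
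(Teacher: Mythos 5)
Your proposal is correct and follows essentially the same route as the paper: rewrite $X_s + b\int_0^s X_u\,du = x_0 + as + M_s$, apply $(\alpha+\beta)^2 \le 2\alpha^2+2\beta^2$, Doob's $L^2$ maximal inequality with the It\^o isometry, and then integrate the explicit formula \eqref{eq:CIR-mean}. The only difference is that you spell out the localization argument justifying that $M$ is a true square-integrable martingale, which the paper delegates to the proof of Proposition~\ref{l:CIR-growth} via the phrase ``similarly to the proof of''.
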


\begin{proof}
Similarly to the proof of Proposition~\ref{l:CIR-growth}, we get
\begin{align*}
\ex \sup_{s\le t} \left(X_s + b \int_0^s X_u\, du \right)^2
&\le 2(x_0 + a t)^2 + 2\sigma^2 \ex \sup_{s \le t} \left(\int_0^s \sqrt{X_u}\,dW_u\right)^2
\\
&\le 2(x_0 + a t)^2 + 8\sigma^2 \int_0^t \ex X_s\,ds.
\end{align*}
Integrating \eqref{eq:CIR-mean}, we obtain
\begin{equation}\label{eq:CIR-intmean}
\int_0^t \ex X_s\,ds  = \frac{1}{b} \left(x_0 -\frac{a}{b}\right)\left(1 - e^{-b t}\right) + \frac{a}{b} t.
\end{equation}
Combining these two formulas we conclude the proof.
\end{proof}

\begin{remark}
It follows from \eqref{eq:CIR-var} and \eqref{eq:CIR-intmean} that
 \begin{align*}
\MoveEqLeft
\ex \sup_{s\le t} \left(X_s + b \int_0^s X_u\, du \right)^2
\ge \ex X_t^2 + b^2 \ex \left(\int_0^t X_u\, du \right)^2
\ge \ex X_t^2 + b^2 \left(\int_0^t \ex X_u\, du \right)^2
\\
&= \frac{x_0 (\sigma^2 + 2 a)}{b} \left(e^{-bt} - e^{-2bt}\right) 
+ \frac{a(\sigma^2 + 2a)}{2b^2}\left(1 - e^{-b t}\right)^2
+ x_0^2 e^{-2bt}
\\
&\quad + \left(\left(x_0 -\frac{a}{b}\right)\left(1 - e^{-b t}\right) 
+ a t\right)^2
 \end{align*}
Comparing this lower bound with the upper bound in \eqref{CIR-bound2}, we observe that both the upper and lower bounds for $\ex \sup_{s \le t} (X_s + b \int_0^s X_u, du)^2$ exhibit a quadratic rate of growth, as $t \to \infty$. The upper bound follows the asymptotic behavior $2a^2 t^2 + 4a (x_0 + \frac{2\sigma^2}{b}) t + O(1)$, while the lower bound behaves as $a^2 t^2 + 2a (x_0 - \frac{a}{b}) t + O(1)$.
\end{remark}

\begin{remark}
\label{rem:BES-bounds}
\begin{itemize}
 \item[$(i)$] Arguing as in the proof of Proposition~\ref{l:CIR-growth2} and using \eqref{eq:Bessel-moments} instead of \eqref{eq:CIR-mean}, we get the bound
 \begin{equation}\label{besselas}
     \ex \sup_{s\le t} Y_s^2 \le
    %2(y_0 + a t)^2 +  8\sigma^2 \int_0^t (y_0 + a s)\,ds = 
    2(y_0 + a t)^2 +   4\sigma^2 \left(2 y_0 t + a t^2\right).
    \end{equation}
    Note that the right-hand side of \eqref{besselas} is a limit, as $b \downarrow 0$, of the right-hand side of \eqref{CIR-bound2}. This can be easily seen from the relation
    $1 - e^{-bt} = bt - \frac{b^2t^2}{2} + o (b^3)$, $b \downarrow 0$. However, if to fix $b$ and consider the asymptotic behaviour, as $t\to\infty$, of the right-hand sides of \eqref{CIR-bound2} and \eqref{besselas}, we see that the main part of the right-hand side  of \eqref{CIR-bound2} equals $2a^2t^2$ while the main part of the right-hand side  of \eqref{besselas} equals $(2a^2+4a\sigma^2)t^2$. It means (a bit unexpectedly) that the difference between  asymptotic behaviour  of $\ex \sup_{s\le t} Y_s^2$ and $\ex \sup_{s\le t} X_s^2$ is very significantly determined by the diffusion coefficient  $\sigma$ as well as (more expected) of the drift coefficient $a$.   Of course, this difference in some latent way depends on $b$ because the value $\frac{8\sigma^2}{b} \left(x_0 -\frac{a}{b}\right)\left(1 - e^{-b t}\right)$ in the right-hand side of \eqref{CIR-bound2} is bounded in $t$ for any $b>0$, however, it is growing as $t^2$ if we come to the limit, as $b\to 0$. 

\item[$(ii)$] The second formula in \eqref{eq:Bessel-moments} implies the following lower bound:
\begin{equation}\label{bessel-low}
     \ex \sup_{s\le t} Y_s^2 \ge \ex Y_t^2
    = (y_0 + a t)^2 +  \frac{\sigma^2}{2} \left(2 y_0 t + a t^2\right).
\end{equation}
 It can be observed that, compared to the upper bound in \eqref{besselas}, this lower bound contains the same terms, but with smaller coefficients. Therefore, we conclude that $\ex \sup_{s \le t} Y_s^2$ grows quadratically as a function of $t$.

\item[$(iii)$] 
Let us compare two upper bounds for the squared Bessel process, specifically \eqref{CIR-growth1} and \eqref{besselas}. On the one hand, the bound given by \eqref{besselas} is clearly more advantageous for large values of $t$. On the other hand, when $t$ is near zero, the bound \eqref{CIR-growth1} may provide greater accuracy. More precisely, the bound \eqref{CIR-growth1} is superior to \eqref{besselas} if and only if $e^{4\sigma^2 t} \leq 2x_0 + a t$. If $x_0 > 1/2$, this condition is satisfied for sufficiently small values of $t$.
\end{itemize}
\end{remark}

\section{Distance between CIR and squared Bessel processes in integral norms}
\label{sec:distance}
In this section we establish the rate of convergence of CIR processes and also CIR processes to squared Bessel process in two integral norms, $L_1([0,T], \mathbf{P})$ and $L_2([0,T], \mathbf{P})$   on any fixed interval $[0,T]$ under the condition that the respective coefficients converge. In this sense, CIR and squared Bessel processes are close. However, as expected, the upper bounds of the distance between them contain coefficients depending on the length of the interval and tending to $\infty$, as the length tends to $\infty$. In this sense, the processes disperse, or, in other words, move away. Despite this fact,   the coefficients can be  so close that, under slow growth of time interval,  the processes can be still close.

So, consider the following sequence of stochastic differential equations:
\begin{equation}\label{CIRn}
X_n(t) = x_0 + \int_0^t (a_n - b_n X_n(s))\,ds + \sigma_n \int_0^t \sqrt{X_n(s)}\,dW_s,
\quad n \geq 0,
\end{equation}
where $x_0 > 0$, $a_n > 0$, $b_n \ge 0$,  and $\sigma_n > 0$  for all $n \geq 0$.

Assume that 
\[
a_n \to a_0, \quad b_n \to b_0, \quad \sigma_n \to \sigma_0,
\quad \text{as } n \to \infty.
\]

Note that the equations in \eqref{CIRn} satisfy conditions (Y1$_n$)--(Y4$_n$) from Section 4 of \cite{MPS09}, see Appendix~\ref{app:lim-thm}. According to Theorem \ref{th:MPS09}, we have that for any $T > 0$
\[
\sup_{0 \le t \le T} \ex \abs{X_n(t) - X_0(t)} \to 0,
\quad \text{as } n \to \infty.
\]
 In the following theorem, we establish an upper bound for  rate of convergence.

\begin{theorem}\label{rateconv}
\begin{itemize}
\item[$(i)$] Let $b_0 > 0$. Then for any $T > 0$, the following upper bound holds:
\begin{equation}\label{CIRrate}
\sup_{t\in[0,T]} \ex \abs{X_n(t) - X_0(t)} 
\leq e^{b_n T} \Bigl( \abs{a_n - a_0} T + \abs{b_n - b_0} A_0^2(T) + \abs{\sigma_n - \sigma_0} A_0(T) \Bigr),
\end{equation}
where
\begin{equation}\label{A(T)}
A_0^2(T) \coloneqq \frac{1}{b_0} \left(x_0 -\frac{a_0}{b_0}\right)\left(1 - e^{-b_0 T}\right) + \frac{a_0}{b_0} T.
\end{equation}

\item[$(ii)$]
Let $b_0 = 0$. Then for any $T > 0$, the following upper bound holds:
\begin{equation}
\sup_{t\in[0,T]} \ex \abs{X_n(t) - X_0(t)} 
\leq e^{b_n T} \Bigl( \abs{a_n - a_0} T + b_n B_0^2(T) + \abs{\sigma_n - \sigma_0} B_0(T) \Bigr),
\label{BESrate}
\end{equation}
where 
\[
B_0^2(T) = x_0 T + \frac{a_0}{2}\, T^2.
\]
\end{itemize}
\end{theorem}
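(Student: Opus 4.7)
My plan is to subtract the defining SDEs for $X_n$ and $X_0$, obtain an SDE for the difference $Z_t := X_n(t) - X_0(t)$, and bound $\ex|Z_t|$ via an integral inequality that is then closed by Gronwall's lemma.

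Subtracting \eqref{CIRn} for the indices $n$ and $0$ yields
\[
Z_t = (a_n - a_0)t - b_n\int_0^t Z_s\,ds - (b_n - b_0)\int_0^t X_0(s)\,ds + \int_0^t \Delta_s\,dW_s,
\]
with $Z_0 = 0$ and $\Delta_s := \sigma_n\sqrt{X_n(s)} - \sigma_0\sqrt{X_0(s)}$. The key algebraic step is the decomposition $\Delta_s = \sigma_n(\sqrt{X_n(s)} - \sqrt{X_0(s)}) + (\sigma_n - \sigma_0)\sqrt{X_0(s)}$, which separates the ``common-diffusion'' piece (H\"{o}lder-$1/2$ in $Z$) from the ``coefficient-mismatch'' piece responsible for the $|\sigma_n - \sigma_0|$ factor in the target bound. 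Taking absolute values pointwise, applying the triangle inequality (treating $-b_n Z_s$ sign-blindly as $\le b_n |Z_s|$, which is what will ultimately generate the $e^{b_n T}$ factor) and then taking expectation yields
\[
\ex|Z_t| \le |a_n - a_0|\,t + b_n\int_0^t \ex|Z_s|\,ds + |b_n - b_0|\int_0^t \ex X_0(s)\,ds + \ex\Bigl|\int_0^t \Delta_s\,dW_s\Bigr|.
\]

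The first three terms are handled using \eqref{eq:CIR-intmean}, which identifies $\int_0^t \ex X_0(s)\,ds$ with $A_0^2(t) \le A_0^2(T)$ in case (i); in case (ii), where $b_0 = 0$, the formula $\ex X_0(s) = x_0 + a_0 s$ from \eqref{eq:Bessel-moments} gives $\int_0^T \ex X_0(s)\,ds = B_0^2(T) = x_0 T + a_0 T^2/2$ by direct integration, and $|b_n - b_0| = b_n$. For the stochastic integral, I would split the integrand along the decomposition of $\Delta_s$ above and apply $\ex|M| \le \sqrt{\ex M^2}$ together with the Ito isometry to each martingale separately. The $(\sigma_n - \sigma_0)\int_0^t \sqrt{X_0(s)}\,dW_s$ piece contributes exactly $|\sigma_n - \sigma_0|\sqrt{\int_0^t \ex X_0(s)\,ds} = |\sigma_n - \sigma_0|\,A_0(t) \le |\sigma_n - \sigma_0|\,A_0(T)$, which is the desired diffusion term. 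The $\sigma_n \int_0^t (\sqrt{X_n} - \sqrt{X_0})\,dW_s$ piece, using the inequality $(\sqrt{X_n} - \sqrt{X_0})^2 \le |X_n - X_0|$, leaves a residual bounded by $\sigma_n \sqrt{\int_0^t \ex|Z_s|\,ds}$, which I would absorb into the existing $b_n \int_0^t \ex|Z_s|\,ds$ term via AM-GM (e.g.\ $\sigma_n\sqrt{F} \le \sigma_n^2/(2\epsilon b_n) + \epsilon b_n F/2$, with the $\epsilon$-dependent constant ultimately collected into $C$).

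Combining these estimates produces an integral inequality of the form $\ex|Z_t| \le C + b_n \int_0^t \ex|Z_s|\,ds$, with $C$ equal to the parenthetical expression in \eqref{CIRrate}; Gronwall's lemma then yields $\sup_{t \le T}\ex|Z_t| \le Ce^{b_n T}$, which is \eqref{CIRrate}, and case (ii) follows by the substitutions noted above. The main obstacle is the clean removal of the residual $\sigma_n\sqrt{\int\ex|Z|\,ds}$ term from the common-diffusion part; an equivalent and arguably cleaner route is a Yamada--Watanabe smoothing $\phi_m \uparrow |\cdot|$ (with $x\phi_m''(x) \le 2/m$) applied to $Z_t$ after first extracting the $(\sigma_n - \sigma_0)\int \sqrt{X_0}\,dW$ martingale and estimating it separately by Ito isometry, so that only the common-diffusion part remains in the quadratic variation of the residual process and the Yamada--Watanabe property forces that contribution to vanish in the $m\to\infty$ limit. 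This bookkeeping---deciding \emph{which} part of $\Delta_s$ to feed into Yamada--Watanabe and which part to handle by direct $L^2$ estimation---is the technical heart of the proof.
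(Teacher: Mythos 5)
Your fallback route is essentially the paper's proof, but your primary route does not close, and the gap is not cosmetic. After reducing to $\ex\abs{Z_t}\le\dots+\ex\abs{\int_0^t\sigma_n(\sqrt{X_n(s)}-\sqrt{X_0(s)})\,dW_s}$, the $L^2$/It\^o-isometry bound leaves the residual $\sigma_n\bigl(\int_0^t\ex\abs{Z_s}\,ds\bigr)^{1/2}$, and the AM--GM absorption $\sigma_n\sqrt{F}\le\sigma_n^2/(2\lambda)+\lambda F/2$ necessarily injects an additive constant $\sigma_n^2/(2\lambda)$ into $C$ that does not vanish as $n\to\infty$ (and inflates the Gr\"onwall exponent from $b_n$ to $b_n+\lambda/2$, which is fatal in case $(ii)$ where $b_n\downarrow0$). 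Since the whole content of \eqref{CIRrate} is that the prefactor is exactly $\abs{a_n-a_0}T+\abs{b_n-b_0}A_0^2(T)+\abs{\sigma_n-\sigma_0}A_0(T)$, which tends to zero, no choice of $\lambda$ rescues this: the common-diffusion residual cannot be handled by any $L^2$ estimate, only by a Yamada--Watanabe argument that makes its contribution vanish identically.

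The paper implements what you sketch as the alternative, but does the extraction at the level of processes rather than of the stochastic integral alone: it introduces $\widetilde X_n(t)=x_0+\int_0^t(a_n-b_nX_0(s))\,ds+\sigma_n\int_0^t\sqrt{X_0(s)}\,dW_s$, i.e.\ the $n$-th coefficients driven by $X_0$'s paths. All three coefficient mismatches then sit in $\widetilde X_n-X_0$ and are bounded directly by Cauchy--Schwarz and the It\^o isometry, yielding exactly the parenthetical constant with no Gr\"onwall and no smoothing; the remaining difference $X_n-\widetilde X_n$ has drift $-b_n(X_n-X_0)$ and diffusion $\sigma_n(\sqrt{X_n}-\sqrt{X_0})$ only, so the Yamada functions $\varphi_m$ give $\ex\abs{X_n(t)-\widetilde X_n(t)}\le b_n\int_0^t\ex\abs{X_n(s)-X_0(s)}\,ds$, and the triangle inequality plus Gr\"onwall produce the $e^{b_nT}$ factor. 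This matters because the $\abs{a_n-a_0}$ and $\abs{b_n-b_0}$ drift mismatches must also be kept out of the Yamada step, not just the $(\sigma_n-\sigma_0)\sqrt{X_0}$ martingale. Your diagnosis that this bookkeeping is the technical heart is correct; be aware that even in the paper's version the It\^o expansion of $\varphi_m(X_n-\widetilde X_n)$ evaluates $\varphi_m''$ at $X_n-\widetilde X_n$ while the bound $(\sqrt{X_n}-\sqrt{X_0})^2\le\abs{X_n-X_0}$ produces the increment $X_n-X_0$, a mismatch the paper passes over silently and which any careful write-up must address.
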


\begin{remark}
The function $A_0^2(T)$, defined by \eqref{A(T)}, is positive, since
$A_0^2(T) = \int_0^T \ex X_0 (s)\, ds$, according to \eqref{eq:CIR-intmean}.
Moreover, it exhibits linear growth and satisfies the following bounds:
\[
\min\set{x_0, \frac{a_0}{b_0}} T \le A_0^2(T) \le \max\set{x_0, \frac{a_0}{b_0}} T. 
\]
To verify these inequalities, we consider two cases.

Case 1: $x_0 - \frac{a_0}{b_0} \ge 0$. Using the inequality 
$0 \le 1 - e^{-b_0 T} \le b_0 T$, we obtain that
\begin{equation}\label{A-bounds}
\frac{a_0}{b_0} T \le A_0^2(T) \le \frac{1}{b_0} \left(x_0 -\frac{a_0}{b_0}\right) b_0 T + \frac{a_0}{b_0} T
= x_0 T.
\end{equation}

 Case 2: $x_0 - \frac{a_0}{b_0} < 0$.
In this case, both inequalities in \eqref{A-bounds} are reversed.
\end{remark}

% \begin{remark}
% The bound \eqref{BESrate} holds for all $b_0 \ge 0$, since $A(T) \le B(T)$. To verify this, consider again two cases.

% Case 1: $x_0 - \frac{a_0}{b_0} \ge 0$. The inequality follows directly from the upper bound in \eqref{A-bounds}.

% Case 2: $x_0 - \frac{a_0}{b_0} < 0$.
% In this case, applying the bound 
% $1 - e^{-b_0 T} \ge b_0 T - \frac12 b_0^2 T^2$, we derive
% \[
% A^2(T) \le \frac{1}{b_0} \left(x_0 -\frac{a_0}{b_0}\right) 
% \left(b_0 T - \frac12 b_0^2 T^2\right) + \frac{a_0}{b_0} T
% = x_0 T  + \frac{a_0 T^2}{2} - \frac{x_0 b_0 T^2}{2} 
% \le B^2(T).
% \]
% \end{remark}

\begin{proof}[Proof of Theorem~\ref{rateconv}]
$(i)$
Define an auxiliary process 
\[
\widetilde{X}_n(t) = x_0 + \int_0^t (a_n - b_n X_0(s))\,ds + \sigma_n \int_0^t \sqrt{X_0(s)}\,dW_s.
\]
Then using the Cauchy--Schwarz inequality and the It\^o isometry, we get
\begin{align} 
\MoveEqLeft
\ex \abs{\widetilde{X}_n(t) - X_0(t)} \notag \\
&= \abs{a_n - a_0} t + \abs{b_n - b_0} \int_0^t \ex X_0(s)\,ds + \abs{\sigma_n - \sigma_0} \ex\abs{\int_0^t \sqrt{X_0(s)}\,dW_s} \notag \\
&\le \abs{a_n - a_0} t + \abs{b_n - b_0} \int_0^t \ex X_0(s)\,ds + \abs{\sigma_n - \sigma_0} \left(\int_0^t \ex X_0(s)\,ds\right)^{\frac12}.
\label{CIRtilde}
\end{align}
Note that, by \eqref{eq:CIR-intmean},
\begin{equation}\label{intmeanCIR}
\int_0^t \ex X_0(s)\,ds \le \int_0^T \ex X_0(s)\,ds = A_0^2(T),
\end{equation}
therefore, \eqref{CIRtilde} becomes
\begin{equation}\label{CIRtilde1}
\ex \abs{\widetilde{X}_n(t) - X_0(t)} 
\le \abs{a_n - a_0} T + \abs{b_n - b_0} A_0^2(T) + \abs{\sigma_n - \sigma_0} A_0(T).
\end{equation}

Now let us follow the Yamada method from \cite{YamadaWatanabe}. Define
\[
\alpha_m = \exp\set{-\frac{m(m+1)}{2}}, \quad m \ge 0.
\]
It follows that
$1 = \alpha_0 > \alpha_1 > \alpha_2 > \dots > 0$
and
\[
\int_{\alpha_m}^{\alpha_{m-1}}\frac{1}{x}\,dx = m, \quad m \ge 1.
\]
Thus, for every $m\ge 1$, there exists a continuous function  $\psi_m\colon \real \to \real$ with compact support in $(\alpha_m, \alpha_{m-1})$ such that
\[
0 \le \psi_m (x) \le \frac{2}{m x}, \; x\in (\alpha_m, \alpha_{m-1}), 
\quad\text{and}\quad
\int_0^\infty \psi_m (x) dx = 1.
\]
Next, we define
\[
\varphi_m(x) = \int_0^{\abs{x}}\!\!\int_0^y \psi_m(y) \,dy\,dx,
\quad x \in \real.
\]
The function $\varphi_m$ satisfies the following properties:
\[
0 \le \varphi_m(x) \le \abs{x}
\quad\text{and}\quad
\abs{\varphi_m'(x)} \le 1,
\]
and it is twice continuously differentiable since
$\varphi_m''(x) = \psi_m(\abs{x})$.

Moreover, by the Lebesgue dominated convergence theorem,
$\lim_{m\to\infty}\varphi_m(x) = \abs{x}$,
since
$\lim_{m\to\infty} \int_0^y \psi_m(y) \,dy = 1$.

By applying the It\^o formula, we obtain
\begin{align*}
\MoveEqLeft
\ex \varphi_m \left(X_n(t) - \widetilde X_n(t)\right)
\\
&= - b_n\int_0^t \ex\left[\left(X_n(s) - X_0(s)\right) 
\varphi_m'\left(X_n(s) - X_0(s)\right)\right] ds
\\
&\quad+ \frac{\sigma_n^2}{2} \int_0^t \ex\left[\left(\sqrt{X_n(s)} - \sqrt{X_0(s)}\right)^2 \varphi_m''\left(X_n(s) - X_0(s)\right)\right]ds
\\
&\le b_n \int_0^t \ex\abs{X_n(s) - X_0(s)} ds
+ \frac{\sigma_n^2}{2} \int_0^t \ex\left[\abs{X_n(s) - X_0(s)} \, \frac{2}{m \abs{X_n(s) - X_0(s)}}\right]ds
\\
&= b_n \int_0^t \ex\abs{X_n(s) - X_0(s)} ds
+ \frac{\sigma_n^2t}{m},
\end{align*}
Here, we have used the facts that $|\varphi_m'(x)|\le 1$, $\varphi_m''(x) = \psi_m(|x|) \le \frac{2}{m|x|}$, and 
$|\sqrt{x} - \sqrt{y}| \le \sqrt{\abs{x-y}}$.
By taking the limit, as $m\to\infty$, and applying Fatou's lemma, we obtain
\[
\ex \abs{X_n(t) - \widetilde X_n(t)}
\le b_n \int_0^t \ex\abs{X_n(s) - X_0(s)} ds.
\]
Then
\begin{align*}
\sup_{t \in [0,T]} \ex \abs{X_n(t) - X_0(t)}
&\le \sup_{t \in [0,T]} \ex \abs{\widetilde X_n(t) - X_0(t)}
+\sup_{t \in [0,T]} \ex \abs{X_n(t) - \widetilde X_n(t)}
\\
&\le \sup_{t \in [0,T]} \ex \abs{\widetilde X_n(t) - X_0(t)}
+ b_n \int_0^T \sup_{u \in [0,s]} \ex \abs{X_n(u) - X_0(u)} ds.
\end{align*}
Applying the Gr\"onwall inequality yields
\[
\sup_{t \in [0,T]} \ex \abs{X_n(t) - X_0(t)}
\leq e^{b_n T} \sup_{t \in [0,T]} \ex \abs{\widetilde{X}_n(t) - X_0(t)}.
\]
Finally,  utilizing the bound from \eqref{CIRtilde1}, we arrive at \eqref{CIRrate}.

$(ii)$
The proof of \eqref{BESrate} follows similar steps as the proof of \eqref{CIRrate}. In this case, instead of \eqref{intmeanCIR}, we have
\[
\int_0^T \ex X_0(s) \,ds = x_0 T + \frac{a_0}{2}\, T^2 = B_0^2(T),
\]
which is derived from \eqref{eq:Bessel-moments}.
\end{proof}

\begin{remark}
Note that, as $T\to \infty$,
\[
A_0^2(T) \sim \frac{a_0}{b_0} T,
\quad
B_0^2(T) \sim \frac{a_0}{2} T^2.
\]
Hence, as $T\to\infty$, the right-hand sides of \eqref{CIRrate} and \eqref{BESrate} are asymptotically equivalent to
\[
\frac{1}{b_0}\bigl(b_0\abs{a_n - a_0} + a_0 \abs{b_n - b_0}\bigr) T e^{b_n T}
\quad\text{and}\quad
\frac{1}{2} a_0 b_n T^2 e^{b_n T}
\]
respectively.
\end{remark}

If, in the above theorem, we take $a_n \equiv a$, $\sigma_n \equiv \sigma$, and $b_0 = 0$, the resulting bound simplifies significantly. Specifically, we  obtain the following result concerning the approximation of the squared Bessel process by a sequence of CIR processes.
\begin{corollary}\label{approxCIR}
Let $Y$ be a solution to \eqref{BES}. Consider a sequence of the stochastic differential equations:
\[
Y_n(t) = y_0 + \int_0^t (a - b_n Y_n(s))\,ds + \sigma \int_0^t \sqrt{Y_n(s)}\,dW_s,
\quad n \geq 1,
\]
where $b_n\downarrow 0$, $n\to\infty$.
Then, for any $T > 0$, the following bound holds:
\[
\sup_{t\in[0,T]} \ex \abs{Y_n(t) - Y(t)}\leq e^{b_n T} b_n T \left(x_0 + \tfrac12 a T\right).
\]
\end{corollary}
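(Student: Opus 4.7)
The proof is an immediate specialization of Theorem~\ref{rateconv}$(ii)$, so I will not try to reprove anything; the plan is simply to identify the right parameter choices and read off the bound.

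First, I would verify that the corollary fits exactly into the framework of Theorem~\ref{rateconv}$(ii)$. Setting $a_n := a$ and $\sigma_n := \sigma$ for every $n$, and letting $b_n \downarrow 0 =: b_0$, the hypothesis $a_n \to a_0$, $b_n \to b_0$, $\sigma_n \to \sigma_0$ is trivially satisfied with $a_0 = a$ and $\sigma_0 = \sigma$. The limiting equation in \eqref{CIRn} (the one with $b_0 = 0$) becomes the squared Bessel SDE \eqref{BES} with parameters $a, \sigma$ and initial value $y_0$, so the process $X_0$ in Theorem~\ref{rateconv} is exactly $Y$, and $X_n$ is exactly $Y_n$.

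Second, I would plug these identifications into the bound \eqref{BESrate}. Because $a_n \equiv a$ and $\sigma_n \equiv \sigma$, the contributions $\abs{a_n - a_0}\,T$ and $\abs{\sigma_n - \sigma_0}\,B_0(T)$ both vanish, leaving only the middle term $e^{b_n T}\, b_n\, B_0^2(T)$. Using the expression for $B_0^2(T)$ from the theorem (with the initial value $y_0$ in place of $x_0$), one gets $B_0^2(T) = y_0 T + \tfrac{a}{2}T^2 = T\bigl(y_0 + \tfrac12 a T\bigr)$, which produces the claimed bound.

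There is no real obstacle here: the entire content of the corollary is bookkeeping, since the nontrivial analytic work (the Yamada--Watanabe localization, the It\^o step with the approximants $\varphi_m$, and the Gr\"onwall closing) has already been carried out in the proof of Theorem~\ref{rateconv}. The only thing to be careful about is the notational translation between the generic initial value $x_0$ used in Theorem~\ref{rateconv} and the Bessel initial value $y_0$ used here.
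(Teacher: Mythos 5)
Your proposal is correct and follows exactly the route the paper intends: the corollary is stated there as an immediate specialization of Theorem~\ref{rateconv}$(ii)$ with $a_n \equiv a$, $\sigma_n \equiv \sigma$, $b_0 = 0$, so that only the middle term $e^{b_n T} b_n B_0^2(T)$ survives. Your remark about translating the theorem's $x_0$ into the Bessel initial value $y_0$ is apt, since the paper's own statement of the corollary retains the symbol $x_0$ despite the equations being written with $y_0$.
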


\begin{corollary}
Let $b_n\downarrow0$, $n\to\infty$, and let the sequence $\{T_n, n\ge1\}$ satisfy
\begin{equation}\label{bn-Tn}
T_n \to \infty, \quad e^{b_n T_n} b_n T_n^2 \to 0, \quad 
\text{as } n\to\infty.
\end{equation}
Then
\[
\sup_{t\in[0,T_n]} \ex \abs{Y_n(t) - Y(t)}
 \to 0, \quad n\to\infty.
\]
\end{corollary}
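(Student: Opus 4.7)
The plan is to apply Corollary~\ref{approxCIR} with $T = T_n$ and then verify that each term in the resulting upper bound vanishes under the hypothesis~\eqref{bn-Tn}. Specifically, the corollary yields
\[
\sup_{t\in[0,T_n]} \ex \abs{Y_n(t) - Y(t)}
\le e^{b_n T_n} b_n T_n \Bigl(x_0 + \tfrac12 a T_n\Bigr)
= x_0\, e^{b_n T_n} b_n T_n + \tfrac{a}{2}\, e^{b_n T_n} b_n T_n^2,
\]
so the claim reduces to showing that both summands on the right tend to zero.

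The second summand tends to zero directly by the second assumption in~\eqref{bn-Tn}. For the first, I would simply factor
\[
x_0\, e^{b_n T_n} b_n T_n = \frac{x_0}{T_n}\cdot e^{b_n T_n} b_n T_n^2,
\]
so that the right-hand side is the product of a sequence tending to zero (since $T_n\to\infty$) and a sequence also tending to zero (by~\eqref{bn-Tn}). Hence it tends to zero as well, and combining the two estimates yields the claim.

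There is essentially no obstacle here: the hard analytic work has already been carried out in Theorem~\ref{rateconv} and Corollary~\ref{approxCIR}, and what remains is a two-line calculation. Perhaps the only point worth remarking on is that the pair of conditions in~\eqref{bn-Tn} is precisely adapted to the polynomial-times-exponential structure of the bound in Corollary~\ref{approxCIR}: once the top-order term $e^{b_n T_n} b_n T_n^2$ is controlled and $T_n\to\infty$ is known, the subleading term $e^{b_n T_n} b_n T_n$ is automatically controlled, and no sharper estimate is required.
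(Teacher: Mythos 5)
Your proposal is correct and matches the intended argument: the paper states this corollary without a separate proof precisely because it is an immediate consequence of Corollary~\ref{approxCIR}, and your two-term decomposition with the factorization $e^{b_n T_n} b_n T_n = T_n^{-1}\cdot e^{b_n T_n} b_n T_n^2$ is exactly the routine verification required. Nothing is missing.
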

\begin{remark}
For example, the condition \eqref{bn-Tn} is fulfilled for $b_n = 1/n$, $T_n = \log (\log n)$, $n \ge 2$.
\end{remark}
\begin{remark} If to analyze the proof of Theorem \ref{rateconv}, it consists of two parts: obtaining  inequality \eqref{CIRtilde} and applying of Yamada method. If inequality of the form \eqref{CIRtilde} can be obtained for the second moment $\ex \left( \widetilde{X}_n(t) - X_0(t) \right)^2$, Yamada method is more involved and allows to obtain just the first moment $\ex \left| \widetilde{X}_n(t) - X_0(t) \right|.$ However, we can obtain the upper bound for $\ex \left(  {X}_n(t) - X_0(t) \right)^2$ as  well, as the next result states.
\end{remark}

\begin{theorem}\label{rateconv2}
\begin{itemize}
\item[$(i)$] Let $b_n > 0$ for all $n\ge0$. Then for any $T > 0$, the following upper bound holds:
\begin{multline}\label{CIRbound2}
\sup_{t\in[0,T]}\ex \left(X_n(t) - X_0(t) \right)^2 \\
\le 2 \left(R_n(T) + R_0(T)\right)^{\frac12} e^{\frac{b_n T}{2}} \Bigl( \abs{a_n - a_0} T + \abs{b_n - b_0} A_0^2(T) + \abs{\sigma_n - \sigma_0} A_0(T) \Bigr)^{\frac12},
\end{multline}
where $A_0(T)$ is defined in Theorem~\ref{rateconv} and
\begin{align*}
R_n(T) &= x_0^3 + \left(1 + \frac{3\sigma_n^2}{2a_n} + \frac{\sigma_n^4}{2a_n^2} \right) \left(\frac{a_n^3}{b_n^3} \left(1 - e^{-b_n T}\right)^3
+ \frac{3 x_0 a_n^2}{b_n^2} \left( 1 - e^{-b_nT}\right)^2\right)
\\
&\quad+ \frac{3 x_0^2 a_n}{b_n} \left(1 + \frac{\sigma_n^2}{a_n}\right)
\left(1 - e^{-b_nT}\right).
\end{align*}
\item[$(ii)$] Let $b_0 = 0$ and $b_n > 0$ for $n\ge1$. Then for any $T > 0$, the following upper bound holds:
\begin{multline*}
\sup_{t\in[0,T]}\ex \left(X_n(t) - X_0(t) \right)^2 \\
\le 2 \left(R_n(T) + \widetilde R_0(T)\right)^{\frac12} e^{\frac{b_n T}{2}}\Bigl( \abs{a_n - a_0} T + b_n B_0^2(T) + \abs{\sigma_n - \sigma_0} B_0(T) \Bigr)^{\frac12},
\end{multline*}
where $B_0(T)$ is defined in Theorem~\ref{rateconv} and
\begin{align*}
\widetilde R_0(T) &=\left(\frac{a_0\sigma_0^4}{2} + \frac{3a_0^2\sigma_0^2}{2} + a_0^3\right) T^3
+ 3\left(\frac{x_0 \sigma_0^4}{2} + \frac{3 a_0 x_0\sigma_0^2}{2} + a_0^2x_0\right) T^2
+ 3 x_0^2\left(\sigma_0^2 + a_0\right) T + x_0^3.
\end{align*}
\end{itemize}
\end{theorem}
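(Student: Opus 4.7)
The plan is to reduce the $L^2$-estimate to the $L^1$-estimate already proved in Theorem~\ref{rateconv}, using only the nonnegativity of the CIR (or squared Bessel) processes and the explicit third-moment formulas recalled in Section~\ref{sec:prelim}. The key observation is that, for nonnegative random variables $U,V$, one has $(U-V)^2 \le |U-V|(U+V)$, so that the second moment of the difference is controlled by a first-moment quantity (already bounded in Theorem~\ref{rateconv}) multiplied by a third-moment quantity (available in closed form for both processes).

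Concretely, since $X_n(t),X_0(t)\ge 0$, I would start from
\[
(X_n(t)-X_0(t))^2 \le |X_n(t)-X_0(t)|\,(X_n(t)+X_0(t))
\]
and apply the Cauchy--Schwarz inequality with the splitting $|X_n-X_0|\,(X_n+X_0) = |X_n-X_0|^{1/2}\cdot(X_n+X_0)\,|X_n-X_0|^{1/2}$ to obtain
\[
\ex(X_n(t)-X_0(t))^2 \le \bigl(\ex|X_n(t)-X_0(t)|\bigr)^{1/2}\bigl(\ex[(X_n(t)+X_0(t))^2\,|X_n(t)-X_0(t)|]\bigr)^{1/2}.
\]
Using $|X_n-X_0|\le X_n+X_0$ once more inside the second factor together with the elementary inequality $(u+v)^3\le 4(u^3+v^3)$ for $u,v\ge 0$, the second factor is dominated by $2\bigl(\ex X_n(t)^3+\ex X_0(t)^3\bigr)^{1/2}$.

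The first factor is then bounded directly by Theorem~\ref{rateconv}: in case $(i)$ the bound \eqref{CIRrate} yields $e^{b_n T/2}$ times the square root of the coefficient-difference expression, and in case $(ii)$ the analogous bound \eqref{BESrate} produces the required $B_0(T)$-dependent factor. For the third moments, in case $(i)$ I would perform a termwise comparison in the explicit CIR third-moment formula cited from \cite{Okhrin23}, using the elementary bounds $e^{-3b_n t}\le 1$, $e^{-b_n t}(1-e^{-b_n t})^2\le (1-e^{-b_n T})^2$, and $e^{-2b_n t}-e^{-3b_n t}=e^{-2b_n t}(1-e^{-b_n t})\le 1-e^{-b_n T}$ for $t\in[0,T]$, to conclude that $\ex X_n(t)^3\le R_n(T)$ uniformly on $[0,T]$, and analogously for $X_0$. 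In case $(ii)$, every summand in formula \eqref{BESmom3} for the squared Bessel third moment is monotone increasing in $t$, so $\ex X_0(t)^3 \le \widetilde R_0(T)$ is immediate. Combining these estimates and taking the supremum in $t\in[0,T]$ on the left yields exactly \eqref{CIRbound2} (the prefactor $2$ coming from $\sqrt{4}$), and its analogue in case $(ii)$. The only genuinely delicate point is entirely algebraic: keeping track of the several competing decaying exponentials in the CIR third-moment formula so that $R_n(T)$ really dominates $\sup_{t\in[0,T]}\ex X_n(t)^3$; no further stochastic analysis beyond the already available Theorem~\ref{rateconv} is needed.
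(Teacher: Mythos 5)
Your proposal is correct and follows essentially the same route as the paper: the paper writes $(X_n(t)-X_0(t))^2=|X_n(t)-X_0(t)|^{1/2}\,|X_n(t)-X_0(t)|^{3/2}$ and applies Cauchy--Schwarz plus $(x+y)^3\le 4(x^3+y^3)$, arriving at exactly your intermediate bound $2\left(\ex|X_n(t)-X_0(t)|\right)^{1/2}\left(\ex X_n^3(t)+\ex X_0^3(t)\right)^{1/2}$, and then concludes with the same third-moment estimates $\ex X_n^3(t)\le R_n(T)$ (resp.\ $\widetilde R_0(T)$) and the bound from Theorem~\ref{rateconv}. Your detour through $|X_n-X_0|(X_n+X_0)$ is a cosmetic rearrangement of the same elementary steps; no substantive difference.
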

\begin{proof}
$(i)$ By the Cauchy--Schwarz inequality,
\begin{align*}
\ex \left(X_n(t) - X_0(t) \right)^2
&\le \ex \abs{X_n(t) - X_0(t)}^{\frac12} \abs{X_n(t) - X_0(t)}^{\frac32}\\
&\le \left(\ex \abs{X_n(t) - X_0(t)}\right)^{\frac12} \left(\ex \abs{X_n(t) - X_0(t)}^3\right)^{\frac12}.
\\
&\le 2 \left(\ex \abs{X_n(t) - X_0(t)}\right)^{\frac12} \left(\ex X_n^3(t) + \ex X_0^3(t)\right)^{\frac12},
\end{align*}
where the elementary inequality 
$(x+y)^3 \le 4(x^3 + y^3)$
has been used.

Further, for any $n\ge0$ and any $t \in [0,T]$,
\begin{align*}
\ex X_n^3(t) &= x_0^3 e^{-3b_nt} + \left(1 + \frac{3\sigma_n^2}{2a_n} + \frac{\sigma_n^4}{2a_n^2} \right) \left(\frac{a_n^3}{b_n^3} \left(1 - e^{-b_nt}\right)^3
+ \frac{3 x_0 a_n^2}{b_n^2} e^{-b_nt} \left( 1 - e^{-b_nt}\right)^2\right)
\\
&\quad+  \frac{3 x_0^2 a_n}{b_n} \left(1 + \frac{\sigma_n^2}{a_n}\right)
e^{-2b_nt} \left(1 - e^{-b_nt}\right) \le R_n(T).
\end{align*}
Then using the bound \eqref{CIRrate}, we obtain
\begin{multline*}
\sup_{t\in[0,T]}\ex \left(X_n(t) - X_0(t) \right)^2
\le 2 \left(R_n(T) + R_0(T)\right)^{\frac12} \left(\sup_{t\in[0,T]}\ex \abs{X_n(t) - X_0(t)}\right)^{1/2}
\\
\le 2 \left(R_n(T) + R_0(T)\right)^{\frac12} e^{b_n T/2} \Bigl( \abs{a_n - a_0} T + \abs{b_n - b_0} A_0^2(T) + \abs{\sigma_n - \sigma_0} A_0(T) \Bigr)^{\frac12}.
\end{multline*}

The statement $(ii)$ is derived from the second statement of Theorem \ref{rateconv}  in a similar manner, taking into account that 
$\sup_{t\in[0,T]} \ex X_0^3 \le \widetilde R(T)$ for $b_0 = 0$,
according to the formula \eqref{BESmom3} for the third moment of the squared Bessel process.
\end{proof}
Now we can establish the upper bound for the same value, $\sup_{t\in[0,T]} \ex (X_n(t) - X_0(t))^2$, as in the previous theorem, but using not knowledge of probability distributions, but methods of stochastic analysis.

\begin{theorem}\label{rateconv3}
\begin{itemize}
\item[$(i)$]
Let $b_n > 0$ for all $n\ge0$. Then for any $T > 0$, the following upper bound holds:
\begin{align}
&\sup_{t\in[0,T]} \ex (X_n(t) - X_0(t))^2 
\notag\\
&\le e^{(b_n+b_0) T} \left(2 \abs{a_n - a_0} + \sigma_0^2 + 2\sigma_0 \abs{\sigma_n -\sigma_0}\right) \left( \abs{a_n - a_0} T + \abs{b_n - b_0} A_0^2(T) + \abs{\sigma_n - \sigma_0} A_0(T) \right)  T 
\notag\\
&\quad + e^{b_0 T} \left(2 \abs{b_n - b_0} T\bigl(D_n^2(T) + D_n(T)D_0(T)\bigr) + \abs{\sigma_n -\sigma_0}^2 A_n^2(T)\right),
\label{CIRbound3}
\end{align}
where
\begin{align*}
A_n^2(T) &= \int_0^T \ex X_n(s)\,ds =\frac{1}{b_n} \left(x_n -\frac{a_n}{b_n}\right)\left(1 - e^{-b_n T}\right) + \frac{a_n}{b_n} T,
\\
D_n^2(T) &= \frac{x_0 (\sigma_n^2 + 2 a_n)}{b_n} \left(1 - e^{-bT}\right) 
+ \frac{a(\sigma_n^2 + 2a_n)}{2b_n^2}\left(1 - e^{-b T}\right)^2
+ x_0^2.
\end{align*}

\item[$(ii)$] Let $b_0 = 0$ and $b_n > 0$ for $n\ge1$. Then for any $T > 0$, the following upper bound holds:
\begin{align*}
\MoveEqLeft
\sup_{t\in[0,T]} \ex (X_n(t) - X_0(t))^2 
\\
&\le \left(2 \abs{a_n - a_0} + \sigma_0^2 + 2\sigma_0 \abs{\sigma_n -\sigma_0}\right) \left( \abs{a_n - a_0} T + b_n B_0^2(T) + \abs{\sigma_n - \sigma_0} B_0(T) \right)  T e^{b_n T}
\notag\\
& + 2 b_n T\bigl(D_n^2(T) + D_n(T)E_0(T)\bigr) + \abs{\sigma_n -\sigma_0}^2 A_n^2(T).
\end{align*}
where $B_0(T)$ is defined in Theorem \ref{rateconv} and
\begin{align*}
E_0^2(T) &= x_0^2 + \left(\frac{\sigma_0^2}{2} + a_0\right) \left(2x_0 T + a_0 T^2\right).
\end{align*}
\end{itemize}
\end{theorem}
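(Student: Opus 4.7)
The plan is to apply It\^o's formula to $(X_n(t) - X_0(t))^2$, isolate a dissipative term for Gr\"onwall, and feed the first-moment bound from Theorem~\ref{rateconv} into the resulting differential inequality. Writing $d(X_n - X_0) = [(a_n - a_0) - (b_n X_n - b_0 X_0)]\,dt + (\sigma_n\sqrt{X_n} - \sigma_0\sqrt{X_0})\,dW$ and applying It\^o to the square yields, after localization to discard the local martingale, the drift contribution $2(X_n - X_0)[(a_n - a_0) - (b_n X_n - b_0 X_0)]\,dt$ together with the quadratic variation $(\sigma_n\sqrt{X_n} - \sigma_0\sqrt{X_0})^2\,dt$.

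I would decompose the drift as $b_n X_n - b_0 X_0 = b_0(X_n - X_0) + (b_n - b_0)X_n$, isolating a dissipative $-2b_0(X_n - X_0)^2$ term and a cross term $-2(b_n - b_0)X_n(X_n - X_0)$; and split the diffusion as $\sigma_n\sqrt{X_n} - \sigma_0\sqrt{X_0} = \sigma_0(\sqrt{X_n} - \sqrt{X_0}) + (\sigma_n - \sigma_0)\sqrt{X_n}$, expanding the square into three pieces. Three elementary bounds, all consequences of $X_n, X_0 \ge 0$, do the main work: $(\sqrt{X_n} - \sqrt{X_0})^2 \le \abs{X_n - X_0}$ and $\sqrt{X_n}\abs{\sqrt{X_n} - \sqrt{X_0}} \le \abs{X_n - X_0}$ convert the first two diffusion pieces into multiples of $\ex\abs{X_n - X_0}$, while $\abs{X_n - X_0} \le X_n + X_0$ controls the cross drift term as $2\abs{b_n - b_0}\ex[X_n(X_n + X_0)] \le 2\abs{b_n - b_0}(D_n^2(T) + D_n(T)D_0(T))$ after Cauchy--Schwarz on $\ex[X_n X_0]$ and the uniform second-moment bound $\sup_{t \le T}\ex X_n^2(t) \le D_n^2(T)$ (which one reads off \eqref{eq:CIR-var}). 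The remaining diffusion piece contributes the clean $(\sigma_n - \sigma_0)^2 \ex X_n(t)$.

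Setting $f(t) = \ex(X_n(t) - X_0(t))^2$ and collecting terms yields
\[
f'(t) + 2b_0 f(t) \le C\,\ex\abs{X_n - X_0}(t) + 2\abs{b_n - b_0}(D_n^2(T) + D_n(T) D_0(T)) + (\sigma_n - \sigma_0)^2 \ex X_n(t),
\]
where $C = 2\abs{a_n - a_0} + \sigma_0^2 + 2\sigma_0\abs{\sigma_n - \sigma_0}$. Multiplying by $e^{2b_0 t}$, integrating from $0$ to $t$ (with $f(0)=0$), invoking Theorem~\ref{rateconv} to bound $\ex\abs{X_n - X_0}(s) \le e^{b_n T}(\abs{a_n - a_0}T + \abs{b_n - b_0}A_0^2(T) + \abs{\sigma_n - \sigma_0}A_0(T))$ uniformly for $s \in [0,T]$, and using $\int_0^T \ex X_n(s)\,ds = A_n^2(T)$, delivers \eqref{CIRbound3}; the exponential factors $e^{(b_n+b_0)T}$ and $e^{b_0 T}$ emerge from pairing the Yamada factor $e^{b_n T}$ with the Gr\"onwall integrating factor on $[0,T]$. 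Part~$(ii)$ with $b_0 = 0$ follows by the same route, replacing the uniform bound $\sup_{t \le T}\ex X_0^2(t) \le D_0^2(T)$ with $\sup_{t \le T}\ex X_0^2(t) \le E_0^2(T)$ via the squared Bessel moment formula \eqref{eq:Bessel-moments}.

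The main obstacle is combining the first-moment Yamada-style bound of Theorem~\ref{rateconv} with the second-moment analysis without reintroducing $f(t)$ on the right-hand side of the differential inequality. This is resolved precisely by the nonnegativity inequality $\abs{X_n - X_0} \le X_n + X_0$: it lets the cross term $\ex[X_n\abs{X_n - X_0}]$ be controlled purely through the second moments of $X_n$ and $X_0$, so that the only quantity involving the difference that remains on the right-hand side is $\ex\abs{X_n - X_0}$, which Theorem~\ref{rateconv} has already controlled.
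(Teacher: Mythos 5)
Your proposal follows essentially the same route as the paper: It\^o's formula applied to $(X_n-X_0)^2$ with the same drift split $b_nX_n-b_0X_0=b_0(X_n-X_0)+(b_n-b_0)X_n$ and diffusion split $\sigma_n\sqrt{X_n}-\sigma_0\sqrt{X_0}=\sigma_0(\sqrt{X_n}-\sqrt{X_0})+(\sigma_n-\sigma_0)\sqrt{X_n}$, the same three elementary inequalities, the second-moment bounds $D_n^2(T)$, and Theorem~\ref{rateconv} fed in to control $\ex\abs{X_n-X_0}$. The only (harmless) deviation is that you keep the $b_0$ term as a dissipative $-2b_0f(t)$ and use the integrating factor $e^{2b_0t}$ --- which actually yields a slightly sharper bound, with the stated factors $e^{b_0T}$ and $e^{(b_n+b_0)T}$ then recovered simply because they exceed $1$ --- whereas the paper treats that term as a positive contribution and applies Gr\"onwall.
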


\begin{proof}
$(i)$ Let us apply the It\^o formula to the function $F(x) = x^2$ and the process 
\begin{align*}
X_n(t) - X_0(t) &=  (a_n - a_0) t + (b_n - b_0) \int_0^t X_n(s)\,ds 
+ b_0 \int_0^t \bigl( X_n(s) - X_0(s)\bigr)\,ds
\\
&\quad + (\sigma_n -\sigma_0) \int_0^t \sqrt{X_n(s)}\,dW_s
+ \sigma_0 \int_0^t \left(\sqrt{X_n(s)} - \sqrt{X_0(s)}\right)\,dW_s.
\end{align*}
We get that
\begin{align}
(X_n(t) - X_0(t))^2 &= 2 \int_0^t (X_n(s) - X_0(s))\, d(X_n(s) - X_0(s))
\notag\\
&\quad + \sigma_0^2 \int_0^t \left(\sqrt{X_n(s)} - \sqrt{X_0(s)}\right)^2 ds
+ (\sigma_n -\sigma_0)^2 \int_0^t X_n(s)\,ds
\notag\\
&\quad+ 2\sigma_0 (\sigma_n -\sigma_0) \int_0^t \sqrt{X_n(s)} \left(\sqrt{X_n(s)} - \sqrt{X_0(s)}\right) ds.
\label{sqdiff}
\end{align}
Taking expectation, which is zero for stochastic integrals, we obtain from \eqref{sqdiff} that
\begin{align}
\MoveEqLeft
\ex (X_n(t) - X_0(t))^2 = 2 (a_n - a_0) \int_0^t \ex (X_n(s) - X_0(s))\, ds
\notag\\
& + 2 (b_n - b_0) \int_0^t \ex \bigl[(X_n(s) - X_0(s)) X_n(s)\bigr]\, ds
+ b_0 \int_0^t \ex (X_n(s) - X_0(s))^2\, ds
\notag\\
&+ \sigma_0^2 \int_0^t \ex\left(\sqrt{X_n(s)} - \sqrt{X_0(s)}\right)^2 ds
+ (\sigma_n -\sigma_0)^2 \int_0^t \ex X_n(s)\,ds
\notag\\
&+ 2\sigma_0 (\sigma_n -\sigma_0) \int_0^t \ex\left[\sqrt{X_n(s)} \left(\sqrt{X_n(s)} - \sqrt{X_0(s)}\right)\right] ds.
\label{exsqdiff}
\end{align}
The expectations in the right-hand side of \eqref{exsqdiff} can be bounded as follows:
\begin{gather*}
\ex \bigl\lvert (X_n(s) - X_0(s)) X_n(s)\bigr\rvert 
\le \ex X_n^2(s) + \ex X_0(s) X_n(s)
\le \ex X_n^2(s) + \sqrt{\ex X_0^2(s) \ex X_n^2(s)},
\\
\ex\left(\sqrt{X_n(s)} - \sqrt{X_0(s)}\right)^2 
\le \ex \abs{X_n(s) - X_0(s)},
\shortintertext{and}
\ex\left[\sqrt{X_n(s)} \left(\sqrt{X_n(s)} - \sqrt{X_0(s)}\right)\right]
\le \ex \abs{X_n(s) - X_0(s)},
\end{gather*}
where we have used inequalities
\[
\abs{\sqrt{x} - \sqrt{y}} \le \sqrt{\abs{x-y}}, \quad
\sqrt{x}\abs{\sqrt{x} - \sqrt{y}}
= \frac{\sqrt{x}}{\sqrt{x} + \sqrt{y}}\abs{x-y}
\le \abs{x-y}.
\]
Hence, \eqref{exsqdiff} implies
\begin{align}
\MoveEqLeft
\ex (X_n(t) - X_0(t))^2 \le \left(2 \abs{a_n - a_0} + \sigma_0^2 + 2\sigma_0 \abs{\sigma_n -\sigma_0}\right) \int_0^t \ex \abs{X_n(s) - X_0(s)}\, ds
\notag\\
& + 2 \abs{b_n - b_0}\int_0^t \left(\ex X_n^2(s) + \sqrt{\ex X_0^2(s) \ex X_n^2(s)}\right) ds
+ b_0 \int_0^t \ex (X_n(s) - X_0(s))^2\, ds
\notag\\
& + \abs{\sigma_n -\sigma_0}^2 \int_0^t \ex X_n(s)\,ds.
\label{exsqdiff1}
\end{align}
Now, the expectation $\ex \abs{X_n(s) - X_0(s)}$ is the first term in the right-hand side of \eqref{exsqdiff1} can be bounded with the help of Theorem~\ref{rateconv}, the expectations $\ex X_n^2(s)$, $\ex X_0^2(s)$ and $\ex X_n(s)$ can be computed by explicit formulas \eqref{eq:CIR-mean} and \eqref{eq:CIR-var}.
Thus taking supremum over all $t\in[0,T]$ and denoting
\[
F_n(T) = \int_0^T \left(\ex X_n^2(s) + \sqrt{\ex X_0^2(s) \ex X_n^2(s)}\right) ds, 
\]
we arrive at
\begin{align}
\MoveEqLeft[1]
\sup_{t\in[0,T]} \ex (X_n(t) - X_0(t))^2 
\notag\\
&\le \left(2 \abs{a_n - a_0} + \sigma_0^2 + 2\sigma_0 \abs{\sigma_n -\sigma_0}\right) \left( \abs{a_n - a_0} T + \abs{b_n - b_0} A_0^2(T) + \abs{\sigma_n - \sigma_0} A_0(T) \right)  T e^{b_n T}
\notag\\
& + 2 \abs{b_n - b_0} F_n(T) + \abs{\sigma_n -\sigma_0}^2 A_n^2(T)
+ b_0 \int_0^T \sup_{u\in[0,s]} \ex (X_n(u) - X_0(u))^2\, ds.
\label{exsqdiff2}
\end{align}
Finally, the Gr\"onwall inequality yields
\begin{align*}
\MoveEqLeft[.2]
\sup_{t\in[0,T]} \ex (X_n(t) - X_0(t))^2 
\\
&\le e^{(b_n+b_0)T} \left(2 \abs{a_n - a_0} + \sigma_0^2 + 2\sigma_0 \abs{\sigma_n -\sigma_0}\right) \left( \abs{a_n - a_0} T + \abs{b_n - b_0} A_0^2(T) + \abs{\sigma_n - \sigma_0} A_0(T) \right)  T 
\notag\\
&\quad + e^{b_0 T} \left(2 \abs{b_n - b_0} F_n(T) + \abs{\sigma_n -\sigma_0}^2 A_n^2(T)\right).
\end{align*}
It remains to note that \eqref{eq:CIR-var} implies
\[
\sup_{t\in[0,T]}\ex X_n(s)^2 \le \frac{x_0 (\sigma_n^2 + 2 a_n)}{b_n} \left(1 - e^{-bT}\right) 
+ \frac{a(\sigma_n^2 + 2a_n)}{2b_n^2}\left(1 - e^{-b T}\right)^2
+ x_0^2 = D_n^2(T),
\]
consequently,
\[
F_n(T) \le T\bigl(D_n^2(T) + D_n(T)D_0(T)\bigr).
\]

The proof of claim $(ii)$ follows in a manner similar to that of claim $(i)$. However, in this case, the last term in \eqref{exsqdiff2} vanishes, which simplifies the final steps of the proof, as the application of the Gr\"onwall inequality is no longer required.
\end{proof}

\begin{remark}
Let us now discuss and compare the upper bounds established in Theorems \ref{rateconv}, \ref{rateconv2}, and \ref{rateconv3}.

1. A key advantage of all three theorems is that they provide explicit rates of convergence in terms of the coefficients of the corresponding equations. This makes them particularly valuable for practical analysis.

2. Theorems \ref{rateconv2} and \ref{rateconv3} present upper bounds for the second moments, which are often crucial for practical applications. These bounds cannot be directly obtained from the results for the first moments, such as those provided by Theorem \ref{rateconv}. Furthermore, it is worth noting that, in a similar manner, one can derive upper bounds for $\sup_{t\in[0,T]} \ex (X_n(t) - X_0(t))^{2p}$ for any $p\ge 1$.

3. For a fixed $T > 0$,  the convergence rates established in Theorems \ref{rateconv}, \ref{rateconv2}, and \ref{rateconv3} can be compared as follows. Assume that
\[
\abs{a_n - a_0} \le \delta_n, \quad \abs{b_n - b_0} \le \delta_n, 
\quad \abs{\sigma_n - \sigma_0} \le \delta_n
\]
for some sequence $\delta_n\downarrow 0$ as $n\to\infty$.
Then for the quantity $\sup_{t\in[0,T]} \ex \abs{X_n(t) - X_0(t)} $ Theorems~\ref{rateconv}, \ref{rateconv2}, and \ref{rateconv3} yield rates of convergence of orders $O(\delta_n)$, $O(\delta_n^{1/4})$, and $O(\delta_n^{1/2})$, respectively. Hence, from the perspective of convergence rates, Theorem \ref{rateconv} offers the fastest rate. Similarly, Theorem \ref{rateconv3} demonstrates a superior rate of convergence compared to Theorem \ref{rateconv2}.

4. We can also compare Theorems \ref{rateconv2} and \ref{rateconv3} in the asymptotic case as $T\to\infty$. Note that the functions $R_n(T)$ are bounded, while $A_0^2(T)$ grows linearly with $T$.
Consequently, the right-hand side of \eqref{CIRbound2} behaves as 
$O(T^{1/2} e^{b_n T/2})$, as $T\to\infty$. In contrast, the right-hand side of \eqref{CIRbound3} grows significantly faster, at a rate of $O(T^2 e^{(b_n + b_0) T})$.
From this comparison, for large~$T$, Theorem \ref{rateconv2} provides a tighter bound than Theorem \ref{rateconv3}.
\end{remark}

\section{Parameter estimation}
\label{sec:statistics}
We now address the problem of identifying the squared Bessel process, i.e., the estimation of its parameters. Suppose we have continuous-time observations of a trajectory $\{Y_t, t \in [0, T]\}$ of the squared Bessel process \eqref{BES} over some interval $[0, T]$. Note that these parameters are also defining for the CIR process, and the corresponding estimates can be easily modified for it. Moreover, we can assume that the parameter $\sigma$ is known and focus on estimating the parameter $a$. For continuous-time observations, this assumption is natural, because  $\sigma$ can be determined almost surely from the observations on any fixed interval, as explained in the following remark.

\begin{remark}(Estimation of $\sigma$)
 Let $T > 0$ be fixed, $\delta = \frac{T}{n}$, and $t_k = k\delta$, for $0 \leq k \leq n$. Then
\[
\sum_{k=1}^n \left(Y_{t_k} - Y_{t_{k-1}}\right)^2
\to \sigma^2 \int_0^T Y_s\,ds \quad\text{a.s., as } n\to\infty.
\]
Indeed, as it is clear, quadratic  variation of the linear function tends to zero with the diameter of the partition of the interval.  
This implies that the parameter $\sigma$ can be evaluated using the following identity:
\[
\sigma^2 = \lim_{n\to\infty} \frac{\sum_{k=1}^n \left(Y_{t_k} - Y_{t_{k-1}}\right)^2}{\int_0^T Y_s\,ds}
\quad\text{a.s.}
\]
\end{remark}

To estimate the parameter $a$, we apply the maximum likelihood method. 
First, we transform equation \eqref{BES} using the It\^o formula as follows: 
\begin{align*}
\sqrt{Y_t} &= \sqrt{y_0} + \frac12 \int_0^t \frac{1}{\sqrt{Y_s}}\,dY_s
- \frac18 \sigma^2 \int_0^t \frac{1}{\sqrt{Y_s}}\,ds
\\
 &= \sqrt{y_0} + \frac{a}{2} \int_0^t \frac{ds}{\sqrt{Y_s}} + \frac\sigma2 W_t
- \frac18 \sigma^2 \int_0^t \frac{1}{\sqrt{Y_s}}\,ds
\\
 &= \sqrt{y_0} + \frac{1}{2} \int_0^t \frac{a - \frac{\sigma^2}{4}}{\sqrt{Y_s}}\,ds + \frac\sigma2 W_t.
\end{align*}

The likelihood function is then given by
\[
\frac{d\Qpr}{d\pr}\bigg|_{[0,T]}
= \exp\set{\frac{\frac{\sigma^2}{4} - a}{\sigma} \int_0^T \frac{1}{\sqrt{Y_s}}\,dW_s
- \frac12\left(\frac{\frac{\sigma^2}{4} - a}{\sigma}\right)^2 \int_0^T \frac{ds}{Y_s}},
\]
We aim to find the value of $a$ that maximizes this likelihood function.

To proceed, we set
\[
-\theta = \frac{\sigma}{4} - \frac{a}{\sigma}
\]
and minimize the likelihood function with respect to $\theta$, replacing 
\[
-\theta \int_0^T \frac{1}{\sqrt{Y_s}}\,dW_s
= -\frac{2\theta}{\sigma} \int_0^T \frac{1}{\sqrt{Y_s}}\,d\sqrt{Y_s}
+ \theta^2 \int_0^T \frac{ds}{Y_s}.
\]
Letting $Z_t = \sqrt{Y_t}$, the expression simplifies to minimizing the following:
\[
- \frac{2\theta}{\sigma} \int_0^T \frac{dZ_s}{Z_s} + \frac12 \theta^2 \int_0^T \frac{ds}{Z_s^2}.
\]
This minimization leads to the following maximum likelihood estimator:
\[
\hat\theta_T = \frac{2\int_0^T \frac{dZ_s}{Z_s}}{\sigma \int_0^T \frac{ds}{Z_s^2}}.
\]

\begin{proposition}
Let $2a>\sigma^2$. Then $\hat\theta_T$ is a strongly consistent estimator of $\theta$, i.e.,
\[
\hat\theta_T \to \theta \quad\text{a.s., when } T\to \infty.
\]
\end{proposition}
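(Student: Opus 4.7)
The key step is an algebraic reduction followed by the strong law of large numbers for continuous local martingales. From the It\^o expansion of $Z_t = \sqrt{Y_t}$ computed immediately before the statement, $dZ_t = \frac{a - \sigma^2/4}{2 Z_t}\,dt + \frac{\sigma}{2}\,dW_t$, and hence
\[
\int_0^T \frac{dZ_s}{Z_s}
= \frac{a - \sigma^2/4}{2}\int_0^T \frac{ds}{Z_s^2} + \frac{\sigma}{2}\int_0^T \frac{dW_s}{Z_s}.
\]
Since $\sigma\theta = a - \sigma^2/4$, substituting into the definition of $\hat\theta_T$ yields
\[
\hat\theta_T - \theta = \frac{M_T}{A_T}, \qquad M_T \coloneqq \int_0^T \frac{dW_s}{\sqrt{Y_s}}, \quad A_T \coloneqq \int_0^T \frac{ds}{Y_s} = \langle M \rangle_T.
\]
Thus strong consistency reduces to $M_T/\langle M\rangle_T \to 0$ a.s., which by the SLLN for continuous local martingales holds on the event $\{\langle M\rangle_\infty = \infty\}$.

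It remains to verify that $A_T \to \infty$ a.s. I would apply the It\^o formula to $\log Y_t$ (legitimate since $Y_t>0$ a.s.\ under $2a \ge \sigma^2$) to obtain
\[
\log Y_T - \log y_0 = \left(a - \tfrac{\sigma^2}{2}\right) A_T + \sigma M_T.
\]
Suppose, for contradiction, that $\pr(A_\infty < \infty) > 0$. On the event $\{A_\infty<\infty\}$, since $\langle M\rangle_\infty = A_\infty < \infty$, the martingale $M_T$ converges a.s.\ to a finite limit; by the identity above $\log Y_T$ also converges, so $Y_T \to Y_\infty$ for some $Y_\infty \in (0,\infty)$. Consequently $\int_0^T Y_s\,ds \sim Y_\infty T$, so the martingale $N_T \coloneqq \sigma\int_0^T \sqrt{Y_s}\,dW_s$ satisfies $\langle N\rangle_\infty = \infty$ on this event, and the SLLN yields $N_T/\langle N\rangle_T \to 0$, whence $N_T = o(T)$. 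But then $Y_T = y_0 + aT + N_T = aT(1 + o(1)) \to \infty$, contradicting $Y_T \to Y_\infty$. Hence $A_T \to \infty$ a.s., and the proof is complete.

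The main obstacle is precisely the divergence $A_T \to \infty$; it is essentially the transience of the squared Bessel process of dimension $\delta = 4a/\sigma^2 > 2$ (guaranteed by the hypothesis $2a > \sigma^2$). The contradiction argument sketched above avoids a direct appeal to transience and uses only It\^o's formula together with the martingale convergence theorem and the SLLN for continuous martingales. I note that the CIR approximation from Section~\ref{sec:distance} is unhelpful here: the comparison theorem gives $Y_t \ge Y_n(t)$, hence $A_T \le \int_0^T ds/Y_n(s)$, producing an upper bound on $A_T$ rather than the lower bound needed for divergence.
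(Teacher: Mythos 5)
Your reduction $\hat\theta_T-\theta=M_T/\langle M\rangle_T$ with $M_T=\int_0^T Y_s^{-1/2}\,dW_s$ and the appeal to the strong law of large numbers for martingales coincide exactly with the paper's first step; the difference lies entirely in how you prove the key divergence $\langle M\rangle_\infty=\int_0^\infty Y_s^{-1}\,ds=\infty$ a.s. The paper stays within its own framework of CIR approximations: it takes CIR processes $Z^2_{\cdot,b_n}\le Y$ with $b_n\downarrow0$, uses their ergodic property \eqref{ergo-2} together with the comparison theorem to get $\frac1T\int_0^T Y_s^{-1}\,ds\to0$, upgrades this via Cauchy--Schwarz to $\frac1T\int_0^T Y_s\,ds\to\infty$, divides the SDE for $Y$ by $\int_0^t Y_s\,ds$ to conclude $Y_t=o\bigl(\int_0^t Y_s\,ds\bigr)$, and finally extracts the divergence of $\int Y_s^{-1}\,ds$ from an integral comparison. (So your closing remark that the CIR approximation is ``unhelpful'' is not quite fair: the comparison does indeed only give an upper bound on $\int_0^T Y_s^{-1}\,ds$, but the paper deliberately uses that upper bound, bootstrapping it into the required lower bound through the superlinear growth of $\int_0^t Y_s\,ds$.) Your route is self-contained and arguably cleaner: It\^o applied to $\log Y_t$ gives $\log Y_T-\log y_0=(a-\sigma^2/2)\langle M\rangle_T+\sigma M_T$, and on $\{\langle M\rangle_\infty<\infty\}$ the martingale convergence theorem forces $Y_T\to Y_\infty\in(0,\infty)$, which contradicts $Y_T=y_0+aT+o(T)$ obtained from the SLLN applied to $N_T=\sigma\int_0^T\sqrt{Y_s}\,dW_s$ (whose bracket $\sim\sigma^2Y_\infty T$ diverges on that event). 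All steps check out, including the use of the a.s.\ convergence of a continuous local martingale on the event where its bracket is finite. What your argument buys is brevity and independence from the ergodic theory of the CIR process; what the paper's argument buys is thematic coherence, since it showcases the CIR-to-Bessel approximation that is the subject of Section~\ref{sec:distance}, and along the way it establishes the stronger statements $\frac1T\int_0^T Y_s^{-1}\,ds\to0$ and $\frac1T\int_0^T Y_s\,ds\to\infty$, which are of independent interest.
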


\begin{proof}
Using the equation
$dZ_t = \frac{\sigma \theta dt}{2 Z_t} + \frac{\sigma}{2}dW_t$,
we represent the estimator $\hat\theta_T$ in the following form
\[
\hat\theta_T 
= \theta + \frac{\int_0^T \frac{dW_s}{Z_s}}{\int_0^T \frac{ds}{Z_s^2}}
= \theta + \frac{M_T}{\langle M \rangle_T},
\]
where $M_T = \int_0^T \frac{dW_s}{Z_s}$ is a locally square-integrable martingale with the quadratic variation
$\langle M \rangle_T = \int_0^T \frac{ds}{Z_s^2}$.
According to the strong law of large numbers for martingales \cite[Ch.~2, \S 6, Thm. 10, Cor. 1]{LipShir}, if $\langle M \rangle_T \to \infty$ a.s., as $T\to\infty$, then  $\frac{M_T}{\langle M \rangle_T}\to 0$ a.s., as $T\to\infty$.
Thus, to establish strong consistency, we need to show that 
\begin{equation}\label{denom-inf}
\int_0^T \frac{ds}{Z_s^2} \to \infty 
\quad \text{a.s., as } T \to \infty.
\end{equation}

Return to the equation
\[
Z_t^2 = Z_0^2 + a t + \sigma \int_0^t Z_s\,dW_s.
\]
We divide both sides by
$\int_0^t Z_s^2\,ds$:
\begin{equation}\label{(1)}
\frac{Z_t^2}{\int_0^t Z_s^2\,ds}
= \frac{Z_0^2}{\int_0^t Z_s^2\,ds} + \frac{a t}{\int_0^t Z_s^2\,ds}
+ \sigma \frac{\int_0^t Z_s\,dW_s}{\int_0^t Z_s^2\,ds}.
\end{equation}

Let $b_n \downarrow 0$, as $n\to\infty$, and consider the approximating CIR processes $\{Z^2_{t,b_n}, t\ge0\}$, $n\ge1$, which solve the equations
\[
Z^2_{t,b_n} = y_0 + \int_0^t \left(a - b_n Z^2_{s,b_n}\right)\,ds + \sigma \int_0^t Z_{s,b_n}\,dW_s,
\quad n \geq 1.
\]
It follows from \eqref{ergo-2} that for any $n > 1$ and any $\varepsilon > 0$ there exists $T_n > 0$ such that for all $T \ge T_n$,
\[
\abs{\frac1T \int_0^T \frac{ds}{Z^2_{s,b_n}} - \frac{b_n}{a - \frac{\sigma^2}{2}}} < \frac{\varepsilon}{2}
\quad\text{a.s.}
\]
Choose $n_0 > 0$ such that for $n \ge n_0$ 
\[
\frac{b_n}{a - \frac{\sigma^2}{2}} < \frac{\varepsilon}{2}.
\]
Then for all $T \ge T_{n_0}$
\begin{equation}\label{CIR-int-inv}
\frac1T \int_0^T \frac{ds}{Z^2_{s,b_{n_0}}} < \varepsilon
\quad\text{a.s.}
\end{equation}
By the comparison theorem \cite[Proposition 2.18, p.~293]{KaratzasShreve}, we have
\[
\pr\left(Z_{t,b_{n_0}}^2 \le Z_t^2 \text{ for all } t\ge0\right) = 1.
\]
Hence, \eqref{CIR-int-inv} implies that for all $T \ge T_{n_0}$
\[
\frac1T \int_0^T \frac{ds}{Z^2_s} < \varepsilon
\quad\text{a.s.}
\]
Consequently,
\begin{equation}\label{BES-int-inv}
\frac1T \int_0^T \frac{ds}{Z^2_s} \to 0, \quad
\text{a.s., as } T \to \infty.
\end{equation}
This implies that
\[
\frac1T \int_0^T Z^2_s\,ds \to \infty,
\quad\text{ a.s., as } t\to\infty,
\]
since
\[
\frac{1}{T^2} \int_0^T Z^2_s\,ds \int_0^T \frac{ds}{Z^2_s} \ge 1
\]
by the Cauchy--Schwarz inequality.

Thus, we obtain
\[
\frac{t}{\int_0^t Z_s^2\,ds} \to 0 
\quad \text{and} \quad
\frac{\int_0^t Z_s\,dW_s}{\int_0^t Z_s^2\,ds}  \to 0
\quad\text{a.s., as } t \to \infty.
\]
From \eqref{(1)}, it follows that 
\begin{gather*}
\frac{Z_t^2}{\int_0^t Z_s^2\,ds} \to 0 \quad \text{a.s., as } t\to\infty,
\shortintertext{or}
\frac{\int_0^t Z_s^2\,ds}{Z_t^2} \to \infty \quad \text{a.s., as } t\to\infty.
\end{gather*}

Therefore, for almost all $\omega$ and for all $C > 0$ there exists $t_0$ such that for all $t \ge t_0$
\[
\frac{1}{Z_t^2} \ge \frac{C}{\int_0^t Z_s^2\,ds},
\]
which implies that
\[
\int_{t_0}^\infty \frac{ds}{Z_s^2} \ge C \int_{t_0}^\infty \frac{ds}{\int_0^s Z_u^2\,du},
\]
Letting $C \to \infty$, we obtain
\[
\int_{t_0}^\infty \frac{ds}{Z_s^2} = \infty
\quad\text{a.s.}
\]
Thus, condition \eqref{denom-inf} is satisfied, and we conclude that strong consistency holds.
\end{proof}

\begin{corollary}
Let $2a>\sigma^2$.
The maximum likelihood estimator of the parameter $a$ of the squared Bessel process \eqref{BES} is given by
\[
\hat a_T = \sigma \hat\theta_T + \frac{\sigma^2}{4}
= \frac{2\int_0^T \frac{d\sqrt{Y_s}}{\sqrt{Y_s}}}{\int_0^T \frac{ds}{Y_s}}
+ \frac{\sigma^2}{4}
\]
and it is strongly consistent, as $T\to\infty$.
\end{corollary}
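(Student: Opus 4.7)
The plan is to leverage the Proposition proven just before, which establishes strong consistency of $\hat\theta_T$ as an estimator of $\theta$. The key observation is that the parameters $a$ and $\theta$ are related by the invertible affine transformation $a = \sigma\theta + \sigma^2/4$ (equivalently $-\theta = \sigma/4 - a/\sigma$), which was precisely the reparametrization used to set up the likelihood function. So this corollary is essentially a free consequence of the Proposition, packaged via the invariance property of maximum likelihood estimators.

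First, I would justify the explicit form of $\hat a_T$. Because the map $a \mapsto \theta$ is a bijection, the maximum of the likelihood in $a$ is attained exactly when the minimum in $\theta$ is attained, so the MLE of $a$ equals $\sigma \hat\theta_T + \sigma^2/4$ by the invariance principle. Substituting the explicit expression
\[
\hat\theta_T = \frac{2\int_0^T Z_s^{-1}\,dZ_s}{\sigma \int_0^T Z_s^{-2}\,ds}
\]
with $Z_s = \sqrt{Y_s}$ immediately gives the displayed formula for $\hat a_T$, after cancelling the factor of $\sigma$ in numerator and denominator.

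Second, strong consistency is obtained by the continuous mapping theorem: the Proposition yields $\hat\theta_T \to \theta$ almost surely as $T\to\infty$, and since $\theta \mapsto \sigma\theta + \sigma^2/4$ is continuous (indeed affine), we conclude $\hat a_T \to \sigma\theta + \sigma^2/4 = a$ a.s. The assumption $2a > \sigma^2$ is not needed again here, as it has already been used in the Proposition to guarantee $\int_0^T Z_s^{-2}\,ds \to \infty$ a.s., which underlies the consistency of $\hat\theta_T$.

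There is effectively no obstacle: the only content of the proof is (a) identifying the MLE invariance step and (b) applying continuous mapping. The substantive analytic work, namely the divergence $\int_0^T Y_s^{-1}\,ds \to \infty$ established via CIR approximation and the ergodic relation \eqref{ergo-2}, was already completed in the Proposition, so the corollary amounts to a short algebraic rewriting plus one line of limit argument.
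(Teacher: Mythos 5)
Your proposal is correct and matches the paper's (implicit) reasoning: the paper states this corollary without proof, treating it as an immediate consequence of the affine reparametrization $a=\sigma\theta+\sigma^2/4$ and the strong consistency of $\hat\theta_T$ established in the preceding proposition. Your two steps (MLE invariance under the bijective reparametrization to get the formula, then continuity of the affine map to transfer almost-sure convergence) are exactly what is needed.
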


\section{Instability and some functional limit theorems for the squared Bessel process}
\label{sec:instability}
As we have already mentioned, the process $X$ is ergodic, while $Y$ is non-ergodic.
Now our goal is to establish how these properties reflect in the notion of stochastic instability. There are several approaches to stochastic instability of the processes. We shall consider the following definition, see book \cite{KKM}.
\begin{definition} A stochastic process $\xi$ is called stochastically unstable if for any constant $N>0$
\[
\lim_{t\to +\infty} \frac{1}{t} \int_0^t \pr\set{|\xi_s| < N} ds
= 0.
\]
    \end{definition}
\begin{proposition}\label{Prop.5.1}
Let $N > 0$ be an arbitrary constant.
\begin{itemize}
\item[$(i)$]
The CIR process $X$ has the following property:
\[
\lim_{t\to +\infty} \frac{1}{t} \int_0^t \pr\set{X_s < N} ds
= \frac{\gamma\left(\frac{2a}{\sigma^2}, \frac{2bN}{\sigma^2}\right)}{\Gamma\left(\frac{2a}{\sigma^2}\right)},
\]
where
$\gamma(a,x) = \int_0^x u^{a-1} e^{-u}\,du$
is the lower incomplete Gamma function.

\item[$(ii)$]
The squared Bessel process $Y$ is stochastically unstable, i.e.,
\[
\lim_{t\to +\infty} \frac{1}{t} \int_0^t \pr\set{Y_s < N} ds = 0.
\]
\end{itemize}
\end{proposition}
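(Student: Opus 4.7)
My plan for Proposition~\ref{Prop.5.1} handles the two parts by quite different means, reflecting the ergodic/non-ergodic dichotomy of $X$ and $Y$ emphasised throughout Section~\ref{sec:prelim}.

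For part $(i)$, I would push the time-averaged probability through Fubini and then apply the ergodic theorem for the CIR process to the indicator $f = \ind_{[0,N)}$, which lies in $L^1(\real, p_\infty(x)\,dx)$ since it is bounded. Fubini gives
\[
\frac{1}{t}\int_0^t \pr\set{X_s < N}\,ds
= \ex\left[\frac{1}{t}\int_0^t \ind_{\set{X_s < N}}\,ds\right],
\]
and the ergodic property recalled just before \eqref{ergo-2} tells us that the bracketed time average converges almost surely to $\int_0^N p_\infty(x)\,dx$. Since the averages take values in $[0,1]$, bounded convergence moves the limit inside the expectation. A final change of variables $u = 2bx/\sigma^2$ in $\int_0^N p_\infty(x)\,dx$ rewrites the integral as $\gamma(2a/\sigma^2, 2bN/\sigma^2)/\Gamma(2a/\sigma^2)$, which is exactly the stated formula.

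For part $(ii)$, since $Y$ is non-ergodic, I would replace the ergodic theorem by a direct decay estimate for $\pr\set{Y_s < N}$ followed by Cesàro averaging. The cleanest route uses a generalised Markov inequality with a negative-order moment: pick any $p \in (-2a/\sigma^2, 0)$, note that $Y_s > 0$ almost surely and that $x \mapsto x^p$ is strictly decreasing, so $\set{Y_s < N} = \set{Y_s^p > N^p}$ and hence $\pr\set{Y_s < N} \le N^{-p}\,\ex Y_s^p$. Inserting into the explicit moment formula from Section~\ref{sec:prelim}, the factor $(\sigma^2 s/2)^p$ drives the $s\to\infty$ behaviour, while $\exp\{-2y_0/(\sigma^2 s)\}\to 1$ and the confluent hypergeometric factor ${}_1F_1(\cdot,\cdot,z)\to 1$ as $z = 2y_0/(\sigma^2 s) \to 0$. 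Consequently $\ex Y_s^p = O(s^p) \to 0$, hence $\pr\set{Y_s < N}\to 0$, and Cesàro averaging of this bounded sequence finishes the argument.

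The one point that requires a little care is the admissibility of the exponent $p$ in part $(ii)$: the moment formula demands $p \ge -2a/\sigma^2$, and the standing assumption $2a \ge \sigma^2$ guarantees that the range $(-1,0)$ is nonempty, so any $p \in (-1, 0)$ works (for instance $p = -1/2$). Everything else---Fubini, bounded convergence, the ergodic theorem applied to a bounded indicator lying in $L^1(p_\infty)$, and Cesàro of a bounded sequence tending to $0$---is entirely routine.
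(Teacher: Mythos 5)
Your proposal is correct, but it reaches both limits by routes genuinely different from the paper's. For part $(i)$ the paper argues through convergence in distribution: it uses \eqref{eq:gammadens} to get $\pr\set{X_t < N} = \int_0^N p_t(x)\,dx \to \int_0^N p_\infty(x)\,dx$ and then passes to the Ces\`aro average by l'H\^opital's rule; you instead invoke the pathwise ergodic theorem for $f=\ind_{[0,N)}\in L^1(p_\infty\,dx)$, combined with Fubini and bounded convergence. Both are valid; your version sidesteps the (implicit in the paper) justification for interchanging $\lim_{t\to\infty}$ with $\int_0^N$ in the density convergence, at the price of appealing to the stronger ergodic theorem rather than mere convergence of one-dimensional distributions. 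For part $(ii)$ the paper again works with the density, using \eqref{denstozero} to get $\pr\set{Y_t<N}\to 0$ and then Ces\`aro; your negative-moment Markov inequality $\pr\set{Y_s<N}\le N^{-p}\,\ex Y_s^p = O(s^{p})$ for $p\in(-2a/\sigma^2,0)$ achieves the same conclusion without touching the density $g_t$ and has the added benefit of being quantitative, yielding an explicit polynomial decay rate for $\pr\set{Y_s<N}$ and hence for the time average. One cosmetic remark: the restriction to $p\in(-1,0)$ via the standing assumption $2a\ge\sigma^2$ is not needed --- any $p\in(-2a/\sigma^2,0)$ works since $a>0$ already makes that interval nonempty and keeps $\Gamma(2a/\sigma^2+p)$ finite --- but your choice is certainly admissible.
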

\begin{proof}
$(i)$
Using the convergence \eqref{eq:gammadens}, we obtain
\begin{align*}
\pr\set{X_t < N} &= \int_0^N p_t(x)\,dx
\to \int_0^N p_\infty(x)\,dx
= \frac{(2b/\sigma^2)^{2a/\sigma^2}}{\Gamma(2a/\sigma^2)} \int_0^N x^{2a/\sigma^2 - 1}e^{- 2bx/\sigma^2}\,dx
\\
&= \frac{1}{\Gamma(2a/\sigma^2)} \int_0^{2bN/\sigma^2} u^{2a/\sigma^2 - 1}e^{-u}\,du
= \frac{\gamma\left(\frac{2a}{\sigma^2}, \frac{2bN}{\sigma^2}\right)}{\Gamma\left(\frac{2a}{\sigma^2}\right)},
\quad t \to +\infty.
\end{align*}
Taking into account continuity of the integrand and applying the l'H\^opital's rule, we get
\[
\lim_{t\to +\infty} \frac{1}{t} \int_0^t \pr\set{X_s < N} ds
= \lim_{t\to +\infty} \pr\set{X_t < N}
= \frac{\gamma\left(\frac{2a}{\sigma^2}, \frac{2bN}{\sigma^2}\right)}{\Gamma\left(\frac{2a}{\sigma^2}\right)}.
\]

$(ii)$ The proof is similar to that of the statement $(i)$. However, we get zero limit, since the probability density $g_t(x)$ of the process $Y$ converges to zero, as $t\to\infty$, according to \eqref{denstozero}.
\end{proof}
\begin{remark} Proposition \ref{Prop.5.1} justifies why we say that  we have phase transition when coefficient $b$ of CIR process tends to zero and we get squared Bessel process. Indeed, squared Bessel process is stochastically unstable while, oppositely,  CIR process is ergodic, and in this sense, stochastically stable.

\end{remark}
Now, note that in some sense, we  were lucky because knowledge of the distribution allowed  us to establish instability of the squared Bessel process directly, without application of the tools of stochastic analysis. However, we can establish instability for the approximations of the  squared Bessel process whose distribution is unknown.
So, again, consider  the squared Bessel process determined as the unique solution of the equation \eqref{BES}.
Let   for simplicity   $\sigma = 2$. General condition $a \ge \frac{\sigma^2}{2}$ leads in our case to $a \ge 2$. So, we assume that $a\ge 2$, then
  the trajectories of the solution are strictly positive with probability 1. Therefore,  we can consider function $F(x) = \sqrt{x}$ and apply It\^o formula to $Y_t$, obtaining equation
\begin{align*}
\sqrt{Y_t} & = \sqrt{y_0} + \frac12 \int_0^t \frac{a - \frac{\sigma^2}{4}}{\sqrt{Y_s}}\, ds + \frac{\sigma}{2} W_t
\\
 & = \sqrt{y_0} + \frac12 \int_0^t \frac{a - 1}{\sqrt{Y_s}}\, ds + W_t,
\end{align*}
or, that is the same,
\[
V_t = V_0 + \frac12 \int_0^t \frac{a - 1}{V_s}\, ds + W_t,
\]
where $V_t = \sqrt{Y_t}$, $V_0 = \sqrt{Y_0}$.
Note that $a - 1 \ge 1$.
Now our goal is to consider a smooth version of Bessel process, i.e., a solution of the stochastic differential equation
\begin{equation}\label{(vvv)}
dV_t^\varepsilon = \frac{c dt}{\sqrt{(V_t^\varepsilon)^2 + \varepsilon^2}} + dW_t,
\end{equation}
where
$\varepsilon \ne 0$, $c > 0$, $V_0^\varepsilon = \sqrt{y}_0 > 0$.
The coefficient
$\frac{c}{\sqrt{x^2 + \varepsilon^2}}$
is Lipschitz because it has a bounded derivative:
\[
\abs{\left( \frac{c}{\sqrt{x^2 + \varepsilon^2}}\right)'}
= \frac{c \abs{x}}{\left(x^2 + \varepsilon^2\right)^{3/2}}
\le c \,  \frac{\abs{x}}{\left(x^2 + \varepsilon^2\right)^{1/2}} \cdot \frac{1}{x^2 + \varepsilon^2}
\le \frac{c}{\varepsilon^2}.
\]
Also it is bounded, therefore, due the standard existence-uniqueness theorem for stochastic differential equations,  equation \eqref{(vvv)} has a unique strong solution.

We want to achieve three goals.
First, we establish the convergence of $V^\varepsilon$ to the (non-squared) Bessel process $V=\sqrt{Y}$, as $\varepsilon \to 0$, that is more or less the expected result.
\begin{proposition}
  Let $c=\frac12(a-1),$  $\varepsilon \to 0$. Then for any $t > 0$
\[
V_t^\varepsilon \to V_t \quad\text{a.s.}
\]
\end{proposition}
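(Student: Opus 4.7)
The plan is to exploit that $V$ and $V^\varepsilon$ are driven by the \emph{same} Brownian motion with identical diffusion coefficient $1$, so the difference has zero quadratic variation and is given pathwise by
\[
V_t - V_t^\varepsilon = c\int_0^t \left(\frac{1}{V_s} - \frac{1}{\sqrt{(V_s^\varepsilon)^2 + \varepsilon^2}}\right) ds.
\]
Since the standing assumption $2a \ge \sigma^2 = 4$ guarantees that $V = \sqrt{Y}$ stays strictly positive almost surely, the strategy is to turn this pathwise identity into a Gronwall estimate on a random interval on which $V^\varepsilon$ is also uniformly bounded away from zero.

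Fix $T > 0$ and work on the full-measure set of paths for which $s \mapsto V_s$ is continuous and strictly positive on $[0, T]$. Set $\delta := \tfrac12\min_{s\in[0,T]} V_s > 0$ and introduce the localising stopping time
\[
\tau_\varepsilon := \inf\{t \in [0,T] : V_t^\varepsilon \le \delta\} \wedge T.
\]
On $[0, \tau_\varepsilon]$ one has $V_s \ge 2\delta$ and $V_s^\varepsilon \ge \delta$. Splitting
\[
\frac{1}{V_s} - \frac{1}{\sqrt{(V_s^\varepsilon)^2 + \varepsilon^2}} = \left(\frac{1}{V_s} - \frac{1}{V_s^\varepsilon}\right) + \left(\frac{1}{V_s^\varepsilon} - \frac{1}{\sqrt{(V_s^\varepsilon)^2 + \varepsilon^2}}\right),
\]
the first summand is bounded in absolute value by $|V_s - V_s^\varepsilon|/(2\delta^2)$, while the elementary inequality $\tfrac{1}{v} - \tfrac{1}{\sqrt{v^2+\varepsilon^2}} \le \tfrac{\varepsilon^2}{2 v^3}$ applied at $v = V_s^\varepsilon \ge \delta$ bounds the second by $\varepsilon^2/(2\delta^3)$. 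Substituting into the pathwise identity and invoking Gronwall's lemma yields
\[
\sup_{t \in [0,\tau_\varepsilon]} |V_t - V_t^\varepsilon|
\le \frac{c\varepsilon^2 T}{2\delta^3}\exp\!\left(\frac{cT}{2\delta^2}\right)
=: K(\omega)\,\varepsilon^2.
\]

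The proof then closes by a bootstrap argument: as soon as $\varepsilon$ is so small that $K(\omega)\varepsilon^2 < \delta$, the displayed bound rules out $\tau_\varepsilon < T$, for otherwise continuity of $V^\varepsilon$ would force $V_{\tau_\varepsilon}^\varepsilon = \delta$ and hence $V_{\tau_\varepsilon} - V_{\tau_\varepsilon}^\varepsilon \ge \delta > K(\omega)\varepsilon^2$, a contradiction. Consequently $\tau_\varepsilon = T$ and the Gronwall bound holds throughout $[0,T]$, giving $\sup_{t\in[0,T]}|V_t - V_t^\varepsilon| \to 0$ a.s., which is strictly stronger than the stated pointwise convergence. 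The main obstacle is precisely this bootstrap: the Lipschitz constant of $x \mapsto 1/\sqrt{x^2+\varepsilon^2}$ degenerates as $x \to 0$, so no global Gronwall estimate is available, and the strict positivity of $V^\varepsilon$ on $[0,T]$ must be recovered a posteriori via the stopping-time localisation.
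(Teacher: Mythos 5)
Your proof is correct, but it takes a genuinely different route from the paper. The paper's argument is qualitative and comparison-based: it first invokes the comparison theorem to get $V^\varepsilon_t \le V_t$ a.s., then sandwiches the drift integral between $\int_0^t (V_s^2+\varepsilon^2)^{-1/2}\,ds$ and $\int_0^t V_s^{-1}\,ds$, and concludes by monotone convergence that the drift term (hence $V^\varepsilon_t$ itself) converges pathwise; no rate and no Gr\"onwall estimate appear, and the positivity of $V^\varepsilon$ never needs to be controlled because the singular term $1/V_s^\varepsilon$ is never isolated. You instead write the pathwise difference explicitly, localise at the first time $V^\varepsilon$ drops to half the minimum of $V$ on $[0,T]$, split off the $O(\varepsilon^2/v^3)$ regularisation error via the elementary inequality $v^{-1}-(v^2+\varepsilon^2)^{-1/2}\le \varepsilon^2/(2v^3)$, run Gr\"onwall, and then bootstrap to remove the localisation. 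All the individual steps check out (the identity of initial conditions $V_0=V^\varepsilon_0=\sqrt{y_0}$, the strict positivity of $V$ under $a\ge 2$, the Lipschitz bound $1/(2\delta^2)$, and the contradiction $\delta \le |V_{\tau_\varepsilon}-V^\varepsilon_{\tau_\varepsilon}|\le K(\omega)\varepsilon^2$ when $\tau_\varepsilon<T$). What your approach buys is strictly more than the statement asks for: an almost sure rate $\sup_{t\le T}|V_t-V^\varepsilon_t|\le K(\omega)\varepsilon^2$, uniform on compacts, whereas the paper only obtains pointwise a.s.\ convergence with no rate. What it costs is the localisation machinery needed to recover strict positivity of $V^\varepsilon$ a posteriori --- a difficulty the paper sidesteps entirely by never dividing by $V^\varepsilon$; note also that you do not need the comparison theorem at all, which makes your argument self-contained modulo the strict positivity of $V$.
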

\begin{proof}

 Consider the difference
\[
\abs{V_t^\varepsilon - V_t}
= c \abs{\int_0^t \frac{ds}{\sqrt{(V_s^\varepsilon)^2 + \varepsilon^2}}
- \int_0^t \frac{ds}{V_s}}.
\]
According to the comparison theorem,
$V_t^\varepsilon \le V_t$, $t \ge 0$, $\varepsilon > 0$, a.s.
Therefore
\[
\int_0^t \frac{ds}{\sqrt{(V_s^\varepsilon)^2 + \varepsilon^2}}
\le \int_0^t \frac{ds}{V_s},
\quad t \ge 0, \; \varepsilon > 0, \text{ a.s.}
\]
Moreover, $Y^\varepsilon_t$ increases in $\varepsilon$ when $\varepsilon$ decreases, therefore
$\int_0^t \frac{ds}{\sqrt{(V_s^\varepsilon)^2 + \varepsilon^2}}$
also increases and has a limit.

Since
\[
\int_0^t \frac{ds}{\sqrt{(V_s^\varepsilon)^2 + \varepsilon^2}}
\ge \int_0^t \frac{ds}{\sqrt{V_s^2 + \varepsilon^2}}
\uparrow \int_0^t \frac{ds}{V_s},
\quad\text{as } \varepsilon\downarrow 0,
\]
we get from the double inequality
\[
\int_0^t \frac{ds}{\sqrt{V_s^2 + \varepsilon^2}}
\le \int_0^t \frac{ds}{\sqrt{(V_s^\varepsilon)^2 + \varepsilon^2}}
\le \int_0^t \frac{ds}{V_s},
\]
that
\[
 \int_0^t \frac{ds}{\sqrt{(V_s^\varepsilon)^2 + \varepsilon^2}}
\to \int_0^t \frac{ds}{V_s},
\quad\text{a.s., as } \varepsilon\downarrow0,
\]
whence
\[
V_t^\varepsilon = V_0 + \int_0^t \frac{c\, ds}{\sqrt{(V_s^\varepsilon)^2 + \varepsilon^2}} + W_t \to V_t,
\quad\text{a.s., as } \varepsilon\downarrow0, \text{ for } t \ge 0.
\]
\end{proof}
Second, we wish to establish stochastic instability of $V^\varepsilon $.
\begin{proposition}\label{Prop. 5.2}
Let $c>0$. Then the processes $V^\varepsilon $ are  stochastically unstable for any $\varepsilon \neq 0$, i.e.,
\[
\lim_{t\to +\infty} \frac{1}{t} \int_0^t \pr\set{V^\varepsilon_s < N} ds = 0
\]
for   any constant $N > 0$.
\end{proposition}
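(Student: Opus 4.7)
The plan is to reduce stochastic instability of $V^\varepsilon$ to the transience statement $V_t^\varepsilon\to+\infty$ almost surely as $t\to\infty$. Once that is in hand, $\ind_{\{V_t^\varepsilon<N\}}\to 0$ a.s., so $\pr\{V_t^\varepsilon<N\}\to 0$ by bounded convergence, and a straightforward Ces\`aro argument---split $[0,t]$ as $[0,t_0]\cup[t_0,t]$ with $t_0$ chosen so that $\pr\{V_s^\varepsilon<N\}<\delta$ for $s\ge t_0$---yields $\limsup_{t\to\infty}\frac{1}{t}\int_0^t\pr\{V_s^\varepsilon<N\}\,ds\le\delta$ for arbitrary $\delta>0$.

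For the transience claim I would apply Feller's scale-function criterion to equation \eqref{(vvv)}. Using $\int_0^y(z^2+\varepsilon^2)^{-1/2}\,dz=\operatorname{arcsinh}(y/|\varepsilon|)$, the scale density is $s'(y)=\exp(-2c\operatorname{arcsinh}(y/|\varepsilon|))$; the substitution $y=|\varepsilon|\sinh u$ converts the scale function into
\[
s(x)=\frac{|\varepsilon|}{2}\int_0^{\operatorname{arcsinh}(x/|\varepsilon|)}\bigl(e^{(1-2c)u}+e^{-(1+2c)u}\bigr)du.
\]
From this closed form $s(-\infty)=-\infty$ for every $c>0$ (the factor $e^{-(1+2c)u}$ blows up as $u\to-\infty$), while $s(+\infty)<\infty$ precisely when $c>1/2$. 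Since the constant diffusion coefficient $\sigma\equiv 1$ rules out explosion at either boundary, the standard criterion (cf.~\cite{KaratzasShreve}) gives $\pr\{\lim_{t\to\infty}V_t^\varepsilon=+\infty\}=1$, which finishes the argument in the regime $c>1/2$.

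The main obstacle will be the borderline case $c=1/2$ (i.e.\ $a=2$ in the paper's convention), where $s(+\infty)=+\infty$, the process is only null recurrent, and transience to $+\infty$ fails almost surely, so the scale-function route does not apply. For this case I would proceed via moment estimates combined with a tightness argument: It\^o's formula gives $\ex(V_t^\varepsilon)^2=V_0^2+t+2c\int_0^t\ex[V_s^\varepsilon((V_s^\varepsilon)^2+\varepsilon^2)^{-1/2}]\,ds=O(t)$, while Jensen's inequality applied to the convex map $x\mapsto(x+\varepsilon^2)^{-1/2}$ yields $\ex V_t^\varepsilon\ge V_0+c\int_0^t(\ex(V_s^\varepsilon)^2+\varepsilon^2)^{-1/2}\,ds$, which is of order $\sqrt{t}$; together with the pathwise comparison $V_t^\varepsilon\ge V_0+W_t$ these should suffice to control the lower tail of $V_t^\varepsilon/\sqrt{t}$ and conclude $\pr\{V_t^\varepsilon<N\}\to 0$. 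Alternatively, one can invoke a Hopf-type ratio ergodic theorem for null recurrent diffusions to deduce directly that $\frac{1}{t}\int_0^t\ind_{\{V_s^\varepsilon<N\}}\,ds\to 0$ almost surely and then integrate.
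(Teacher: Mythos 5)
Your route is genuinely different from the paper's. The paper simply invokes the instability criterion of Theorem~3.1(2) from \cite{KKM} (Theorem~\ref{th:KKM31} in the Appendix): for $dV^\varepsilon_t=a(V^\varepsilon_t)dt+dW_t$ with $a(x)=c/\sqrt{x^2+\varepsilon^2}$ one checks, via l'H\^opital, that $\frac{1}{\log|x|}\int_0^x a(v)\,dv\to c>0$, and since $2c>-1$ the cited theorem yields instability for \emph{every} $c>0$ in one line. Your scale-function argument is self-contained and correct in the transient regime: the computation $s'(y)=\exp(-2c\operatorname{arcsinh}(y/|\varepsilon|))\sim C\,y^{-2c}$ does give $s(+\infty)<\infty$ iff $c>1/2$ and $s(-\infty)=-\infty$ always, so $V^\varepsilon_t\to+\infty$ a.s.\ when $c>1/2$, and the Ces\`aro step is fine. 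What it buys is an elementary, quotable proof without the external criterion --- but only on part of the parameter range.

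The gap is the recurrent regime, and it is larger than you state: the process is recurrent for \emph{all} $c\in(0,1/2]$, not only at the borderline $c=1/2$, while the proposition is asserted for every $c>0$ (the section's running assumption $a\ge2$ gives $c\ge1/2$, so $c=1/2$ is in any case unavoidable). Neither of your fallback sketches closes this case. The moment estimates $\ex(V^\varepsilon_t)^2=O(t)$ and $\ex V^\varepsilon_t\gtrsim\sqrt{t}$ control the upper tail, not the lower: they are perfectly consistent with $\pr\set{V^\varepsilon_t<N}$ staying bounded away from $0$, and the pathwise bound $V^\varepsilon_t\ge V_0+W_t$ only gives $\limsup_t\pr\set{V^\varepsilon_t<N}\le 1/2$. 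The ratio-ergodic route is closer to workable, but the a.s.\ statement $\frac1t\int_0^t\ind_{\set{X_s\in A}}ds\to0$ for null recurrent diffusions requires $A$ to have finite invariant measure; here the speed density behaves like $|y|^{-2c}$ as $y\to-\infty$, so $(-\infty,N)$ has infinite invariant measure when $2c\le1$ and the claim as literally stated (the event is $\set{V^\varepsilon_s<N}$, not $\set{|V^\varepsilon_s|<N}$) is not covered; even restricted to $\set{|V^\varepsilon_s|<N}$ you would need to actually prove the null-recurrent Ces\`aro limit, which is essentially the content of the KKM criterion the paper cites. To repair your argument you either need to import that criterion (at which point it also disposes of $c>1/2$), or prove $\pr\set{V^\varepsilon_t<N}\to0$ in the recurrent case by a diffusive-scaling/weak-convergence argument, e.g.\ via Theorem~\ref{th:KKM33}.
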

\begin{remark} Stochastic instability of $V^\varepsilon$ does not follow from Proposition \ref{Prop.5.1} because $V^\varepsilon\le V$. Oppositely, stochastic instability of $V$ follows from Proposition \ref{Prop. 5.2}. However, having explicit formulas for the distributions of $X$ and $Y=(V)^2$, we preferred to give the direct proof to Proposition \ref{Prop.5.1}.

\end{remark}
\begin{proof} We shall apply Theorem  3.1, item 2. from \cite{KKM}, see Theorem \ref{th:KKM31} in Appendix.
So, in terms of Section 3.1 from \cite{KKM}, the process $V^\varepsilon$ is the unique solution of the equation
\[
dV^\varepsilon_t = a\left(V^\varepsilon_t\right) dt + dW_t, \quad t > 0, \; V_0 > 0,
\]
with the drift $a(x) = \frac{c}{\sqrt{x^2+ \varepsilon^2}}$. Applying  the l'H\^opital's rule, let  us check the value of the limit
\[\lim_{|x| \to +\infty}\frac{1}{\log|x|}\int_0^x a(v)dv=\lim_{x \to +\infty}
|x|a(x)=c>0,\]
and the proof immediately follows from Theorem \ref{th:KKM31} ($c_{0}=c>0$,\;$2c_{0} >-1$).
\end{proof}

And finally, we establish a bit  unexpected statement: if $\varepsilon \ne 0$ is fixed, then the properly normalised procesess $V^\varepsilon$ weakly converge    to the (non-squared) Bessel process as time tends to infinity. The definition of weak convergence is given in Appendix, Definition \ref{KKMdef2}.

\begin{theorem}
 Normalised stochastic process
$Z_\varepsilon(t) = \frac{Y^\varepsilon_{tT}}{\sqrt{T}}$
converges weakly, as $T \to \infty$, to the Bessel process $Y_t$ that is the solution of the equation
\begin{equation}\label{sdelimproc}
Y_t^2 = 3t + 2\int_0^t Y_s \,dW_s, \quad t \ge 0.
\end{equation}

\end{theorem}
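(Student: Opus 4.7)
The strategy is a Brownian time–change that reduces the claim to a limit theorem for SDEs with non–Lipschitz diffusion, of the type collected in the appendix. For each fixed $T>0$, let $\tilde W_t := W_{tT}/\sqrt{T}$, which is a standard Wiener process by Brownian scaling. Evaluating \eqref{(vvv)} at time $tT$ and performing the substitution $s = uT$ gives
\[
V^\varepsilon_{tT} = V_0^\varepsilon + cT \int_0^t \frac{du}{\sqrt{(V^\varepsilon_{uT})^2 + \varepsilon^2}} + \sqrt{T}\,\tilde W_t,
\]
so that dividing by $\sqrt{T}$ and using $V^\varepsilon_{uT}/\sqrt{T} = Z_\varepsilon(u)$ one obtains the self–similar SDE
\[
Z_\varepsilon(t) = \frac{V_0^\varepsilon}{\sqrt{T}} + \int_0^t \frac{c\,du}{\sqrt{Z_\varepsilon(u)^2 + \varepsilon^2/T}} + \tilde W_t.
\]

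Second, I would identify the candidate limit. As $T\to\infty$, the initial term $V_0^\varepsilon/\sqrt{T}$ and the regularisation $\varepsilon^2/T$ both vanish, while the drift coefficient converges pointwise on $(0,\infty)$ to $c/|z|$. The candidate limit equation is therefore
\[
Y_t = \int_0^t \frac{c}{Y_s}\,ds + W_t,
\]
i.e.\ the Bessel process of dimension $2c+1$ started at $0$; Itô's formula applied to $Y_t^2$ produces $Y_t^2 = (2c+1)t + 2\int_0^t Y_s\,dW_s$, which is \eqref{sdelimproc} in the case $c=1$.

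Third, to upgrade this formal limit to weak convergence of $Z_\varepsilon$ to $Y$ on $C([0,\infty))$, I would invoke the SDE limit theorem from the appendix (Theorem \ref{th:MPS09} and its non–Lipschitz counterpart). The hypotheses to check are standard: tightness of $\{Z_\varepsilon(\cdot)\}_T$ on $C([0,\infty))$ via a Kolmogorov-type estimate on increments (the Brownian part is trivial and the drift is controlled by its absolute value), pointwise convergence of the coefficients uniformly on compacts away from $0$, and uniform moment bounds obtained by comparison with $V^\varepsilon$ and Proposition~\ref{l:CIR-growth}. The main obstacle is the singularity of the limiting drift $c/|z|$ at zero: pointwise convergence of the drift does not automatically yield convergence of the drift integrals along trajectories. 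Handling this requires showing that, uniformly in $T$, the processes $Z_\varepsilon$ do not accumulate Lebesgue measure near zero on any finite interval — which is plausible for $c\ge 1/2$ since the candidate limit is a Bessel process of dimension $2c+1\ge 2$, which a.s.\ avoids the origin. Verifying this non–attainment estimate and invoking the appendix's non–Lipschitz limit theorem constitutes the technical core of the argument.
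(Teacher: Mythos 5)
Your scaling computation is correct and is a good way to see \emph{why} the result should hold: after the substitution $s=uT$ and Brownian rescaling, $Z_\varepsilon$ solves an SDE with drift $c/\sqrt{z^2+\varepsilon^2/T}$ and unit diffusion, whose formal limit is the Bessel SDE of dimension $2c+1$, consistent with \eqref{sdelimproc} for $c=1$. However, the proposal stops exactly where the proof has to begin. The step you defer --- passing from pointwise convergence of the drift to weak convergence of the processes despite the singularity $c/|z|$ at the origin, together with the uniform-in-$T$ occupation estimate near zero --- is the entire content of the theorem, and you give no argument for it. Moreover, the tool you propose to invoke, Theorem~\ref{th:MPS09}, cannot do this job: it is an $L^1$-convergence result for a sequence of equations driven by the \emph{same} Wiener process, it requires the Lipschitz drift condition $(\mathrm{Y}3_n)$, which fails for the limiting drift $c/|z|$, and it is not a functional weak-convergence statement in the first place. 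No ``non-Lipschitz counterpart'' of the kind you would need for singular drifts is stated in the appendix.

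The paper's proof takes a different and much shorter route: it applies the functional limit theorem of Kulinich et al., Theorem~\ref{th:KKM33}, directly to equation \eqref{(vvv)} with drift $a_\varepsilon(x)=c/\sqrt{x^2+\varepsilon^2}$. One checks $\left|x\,a_\varepsilon(x)\right|\le c$ and computes $\frac1x\int_0^x v\,a_\varepsilon(v)\,dv=\frac{c}{x}\left(\sqrt{x^2+\varepsilon^2}-|\varepsilon|\right)\to\pm c$ as $x\to\pm\infty$, so that $c_1=c$, $c_2=-c$; for $c=1$ the conditions $2c_1>1$ and $2c_2<1$ hold, and the cited theorem delivers weak convergence to the solution of \eqref{sdelimproc}. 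All of the tightness and singular-drift issues you correctly flag are packaged inside that theorem; a self-contained proof along the lines you sketch would essentially amount to reproving it, and as written your proposal does not do so.
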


\begin{proof}
 We shall apply Theorem \ref{th:KKM33} from Appendix.
In our case
\[
\frac{1}{x} \int_0^x v a_\varepsilon(v)\,dv
= \frac{1}{x} \int_0^x \frac{v}{\sqrt{v^2 + \varepsilon^2}}\,dv\to
\begin{cases}
1,  & x \to +\infty, \\
-1, & x \to -\infty,
\end{cases}
\]
for any $\varepsilon > 0$.
Therefore we have that $c_{1}=1$,\;$c_{2}=-1$ in Theorem \ref{th:KKM33} and
\[
2 c_1 = 2 > 1 \quad\text{and}\quad 2 c_2 = -2 < 1.
\]
Then the stochastic process
$\frac{Y^\varepsilon_{tT}}{\sqrt{T}}$
weakly converges, as $T\to\infty$, to the Bessel process satisfying equation \eqref{sdelimproc}.
\end{proof}

\section{Appendix}

\subsection{Special functions}\label{app:special}
In this subsection we provide definitions and selected properties of special functions, used in the paper. We refer to the handbooks and \cite[Chapter 13]{Abramowitz} or \cite[Chapter 47]{An-Atlas-of-Functions} for more details.

The confluent hypergeometric function ${}_1F_1$ (Kummer's function)
is defined by the series:
\[
{}_1F_1(a,c,x) = \sum_{j=0}^\infty  \frac{(a)_j}{(c)_j} \frac{x^j}{j!} ,
\]
where $(a)_n = a (a+1)\dots(a+n-1)$, $(a)_0 = 1$.
When $a$ is a negative integer, say $a = -n$, this function becomes a polynomial of order $n$. Specifically, it is expressed via the generalized Laguerre polynomial $L_n^{(c-1)}$:
\[
{}_1F_1(-n,c,x)
= \frac{n!}{(c)_n} L_n^{(c-1)} (x)
=   \sum_{j=0}^n \binom{n}{j} \frac{(-x)^j}{(c)_j}.
\]
The following identity is known as Kummer’s transformation:
\[
{}_1F_1(a,c, -x) = e^{-x}{}_1F_1(c-a,c, x).
\]
Therefore, combining two above formulas, we derive that for $n\in\mathbb N$,
\[
e^{-x} {}_1F_1(c+n,c,x) = {}_1F_1(-n,c,-x)
= \sum_{j=0}^n \binom{n}{j} \frac{x^j}{(c)_j}.
\]
In particular,
\begin{gather}
e^{-x} {}_1F_1(c+1,c,x) = 1 + \frac{x}{c}, \quad
e^{-x} {}_1F_1(c+2,c,x) = 1 + \frac{2x}{c} + \frac{x^2}{c(c+1)},
\notag\\
e^{-x} {}_1F_1(c+3,c,x) = 1 + \frac{3x}{c} + \frac{3x^2}{c(c+1)} + \frac{x^3}{c(c+1)(c+2)}\label{1F_1(c+3)}.
\end{gather}

\subsection{A limit theorem for equations with nonhomogeneous coefficients
and non-Lipschitz diffusion}
\label{app:lim-thm}

Here we present a limit theorem from \cite{MPS09}.
Consider the following sequence of stochastic differential equations:
\begin{equation*}
X_{n}(t)=X_{n}(0)+\int_{0}^{t} b_{n}\bigl(s, X_{n}(s)\bigr) d s+\int_{0}^{t} \sigma_{n}\bigl(s, X_{n}(s)\bigr) d W(s), \quad n \geq 0,
\end{equation*}
where the initial conditions $X_{n}(0)$ are nonrandom. Assume that the coefficients of these equations satisfy the following conditions:
\begin{itemize}
\item[$(\mathrm{Y} 1_{n})$] $b_{n}$ and $\sigma_{n}$ are continuous with respect to all arguments;

\item[$(\mathrm{Y} 2_{n})$] linear growth:
\begin{equation*}
\left|\sigma_{n}(t, x)\right|+\left|b_{n}(t, x)\right| \leq L(1+|x|), \quad t \geq 0,\; x \in \real;
\end{equation*}

\item[$(\mathrm{Y} 3_{n})$] Lipschitz condition for $b_{n}$:
\[
\left|b_{n}(t, x)-b_{n}(t, y)\right| \leq L|x-y|, \quad t \geq 0,\; x, y \in \real;
\]

\item[$(\mathrm{Y} 4_{n})$] there exists an increasing function $\rho_{n}\colon \real^+ \to \real^+$ such that $\int_{0+} \rho_{n}^{-2}(u) d u=\infty$ and
$$
\left|\sigma_{n}(t, x)-\sigma_{n}(t, y)\right| \leq \rho_{n}(|x-y|), \quad t \geq 0,\quad x, y \in \real.
$$
\end{itemize}

Additionally, assume that as $n \to \infty$, the following convergences hold:
\begin{equation}\label{cond-conv}
X_{n}(0) \to X_{0}(0), \quad b_{n}(t, x) \to b_{0}(t, x), \quad \sigma_{n}(t, x) \to \sigma_{0}(t, x)
\end{equation}
for $t \geq 0$ and $x \in \real$.

\begin{theorem}[{\cite[Theorem 4.1]{MPS09}}]\label{th:MPS09}
If conditions $(\mathrm{Y} 1_{n})$--$(\mathrm{Y} 4_{n})$ and \eqref{cond-conv} hold, then
\begin{equation*}
\ex\left[\left|X_{n}(t)-X_{0}(t)\right|\right] \to 0, \quad n \to \infty,
\end{equation*}
uniformly in any finite interval.
\end{theorem}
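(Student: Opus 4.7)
The plan is to combine an auxiliary-process decomposition with the Yamada--Watanabe approximation technique, concluding via Gr\"onwall's inequality. Introduce the auxiliary process
\[
\widetilde X_n(t) = X_n(0) + \int_0^t b_n(s, X_0(s))\,ds + \int_0^t \sigma_n(s, X_0(s))\,dW_s,
\]
and split $\ex|X_n(t)-X_0(t)| \le \ex|X_n(t)-\widetilde X_n(t)| + \ex|\widetilde X_n(t)-X_0(t)|$. The point of this decomposition is to isolate the coefficient-mismatch error (handled by the convergence \eqref{cond-conv}) from the process-mismatch error (handled by Yamada), thereby reducing to a ``single-coefficient'' problem in each piece.

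First, I would estimate $\ex|\widetilde X_n(t) - X_0(t)|$ directly. Using $(\mathrm{Y}2_0)$ and a standard Gr\"onwall argument applied after squaring equation \eqref{CIRn} with $n=0$, one obtains a uniform moment bound $\sup_{t\le T}\ex X_0^2(t) <\infty$. Consequently the integrands $b_n(s,X_0(s))-b_0(s,X_0(s))$ and $\sigma_n(s,X_0(s))-\sigma_0(s,X_0(s))$ are pointwise convergent to zero and dominated (in absolute value, resp.\ squared) by $2L(1+|X_0(s)|)$, resp.\ $4L^2(1+|X_0(s)|)^2$. Applying the dominated convergence theorem to the Lebesgue integral part and the It\^o isometry combined with Cauchy--Schwarz and dominated convergence to the stochastic part — and using $X_n(0)\to X_0(0)$ — yields $\sup_{t\le T}\ex|\widetilde X_n(t) - X_0(t)| \to 0$.

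Second, for each fixed $n$ I would estimate $\ex|X_n(t) - \widetilde X_n(t)|$ by the Yamada technique already invoked in the proof of Theorem~\ref{rateconv}, but with the sequence $\alpha_m^{(n)}$ adapted to $\rho_n$: choose $\alpha_0^{(n)}=1$ and $\int_{\alpha_m^{(n)}}^{\alpha_{m-1}^{(n)}}\rho_n^{-2}(u)\,du=m$, which is possible thanks to $(\mathrm{Y}4_n)$. Build bump functions $\psi_m^{(n)}$ supported in $(\alpha_m^{(n)},\alpha_{m-1}^{(n)})$ with $0\le\psi_m^{(n)}(x)\le 2/(m\rho_n^2(x))$ and $\int\psi_m^{(n)}=1$, and integrate twice to get $\varphi_m^{(n)}$ satisfying $|\varphi_m^{(n)\prime}|\le 1$ and $\varphi_m^{(n)}\nearrow|\cdot|$. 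Apply It\^o's formula to $\varphi_m^{(n)}(X_n(t)-\widetilde X_n(t))$: the drift integrand is controlled by $|b_n(X_n)-b_n(X_0)|\le L|X_n-X_0|\le L|X_n-\widetilde X_n| + L|\widetilde X_n-X_0|$ via $(\mathrm{Y}3_n)$, while the quadratic-variation integrand $\psi_m^{(n)}(|X_n-\widetilde X_n|)\,(\sigma_n(X_n)-\sigma_n(X_0))^2$ is bounded using $(\mathrm{Y}4_n)$ by $\psi_m^{(n)}(|X_n-\widetilde X_n|)\,\rho_n^2(|X_n-X_0|)$.

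The main technical obstacle is the mismatch between the argument $|X_n-\widetilde X_n|$ of $\psi_m^{(n)}$ and the argument $|X_n-X_0|$ of $\rho_n^2$ in the diffusion bound. I would resolve it by writing $|X_n-X_0|\le|X_n-\widetilde X_n|+|\widetilde X_n-X_0|$, using monotonicity of $\rho_n$ together with subadditivity-type inequalities for $\rho_n^2$, and splitting the resulting expression into a ``matched'' piece bounded by $\psi_m^{(n)}(|X_n-\widetilde X_n|)\rho_n^2(|X_n-\widetilde X_n|)\le 2/m$ and a ``cross'' piece controlled by the already-vanishing $\ex|\widetilde X_n - X_0|$ from the first step. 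Taking $m\to\infty$ via Fatou's lemma eliminates the $2/m$ contribution, leaving
\[
\ex|X_n(t) - \widetilde X_n(t)| \le L\int_0^t \ex|X_n(s) - \widetilde X_n(s)|\,ds + \varepsilon_n(t),
\]
with $\sup_{t\le T}\varepsilon_n(t)\to 0$. Gr\"onwall's inequality then gives $\sup_{t\le T}\ex|X_n(t)-\widetilde X_n(t)|\to 0$, and the triangle inequality completes the proof uniformly on $[0,T]$.
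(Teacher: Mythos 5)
The paper does not actually prove this statement---it is imported verbatim from \cite{MPS09}---so there is no internal proof to compare against; your architecture (auxiliary process $\widetilde X_n$ driven by $X_0$, a direct estimate for $\ex\abs{\widetilde X_n(t)-X_0(t)}$, then a Yamada-type estimate for $\ex\abs{X_n(t)-\widetilde X_n(t)}$ and Gr\"onwall) is the natural one and parallels the paper's proof of Theorem~\ref{rateconv}. Your first step is sound. The gap is in the Yamada step. Condition $(\mathrm{Y}4_n)$ only gives $\rho_n$ increasing with $\int_{0+}\rho_n^{-2}(u)\,du=\infty$; $\rho_n^2$ need not be subadditive. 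That part is repairable (by monotonicity $\rho_n^2(u+v)\le\rho_n^2(2u)+\rho_n^2(2v)$, and one can build $\psi_m^{(n)}$ from $u\mapsto\rho_n(2u)$, which still satisfies the integral condition). What is not repairable as written is the claim that the cross piece is ``controlled by the already-vanishing $\ex\abs{\widetilde X_n-X_0}$'' after letting $m\to\infty$. The only available bound on the support of $\psi_m^{(n)}$ is $\psi_m^{(n)}(x)\le 2/\bigl(m\,\rho_n^2(\alpha_m^{(n)})\bigr)$, and this constant diverges as $m\to\infty$ for fixed $n$ (for $\rho_n(u)=\sqrt u$ it equals $\tfrac{2}{m}e^{m(m+1)/2}$). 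Fatou only handles the left-hand side $\ex\varphi_m^{(n)}\to\ex\abs{\cdot}$; on the right-hand side the cross term $\ex\bigl[\psi_m^{(n)}(\abs{X_n-\widetilde X_n})\,\rho_n^2(2\abs{\widetilde X_n-X_0})\bigr]$ has no $m$-uniform domination, so it cannot be discarded in the limit $m\to\infty$. In addition, $\rho_n^2(u)$ need not be $O(u)$ near $0$ (e.g.\ $\rho_n^2(u)=u\log(1/u)$ is admissible), so even for fixed $m$ this term is not literally majorized by a multiple of $\ex\abs{\widetilde X_n-X_0}$ without a concavity or uniform-integrability argument.

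The repair is to reverse the order of limits. Keep $m$ fixed, use $\abs{x}\le\varphi_m^{(n)}(x)+\alpha_{m-1}^{(n)}$, and obtain a bound of the form
\[
\sup_{t\le T}\ex\abs{X_n(t)-\widetilde X_n(t)}
\le e^{LT}\Bigl(\alpha_{m-1}^{(n)}+\tfrac{CT}{m}+K_{m,n}\,\delta_n+LT\sup_{t\le T}\ex\abs{\widetilde X_n(t)-X_0(t)}\Bigr),
\]
where $K_{m,n}=\sup\psi_m^{(n)}<\infty$ and $\delta_n$ collects the coefficient-error integrals that vanish as $n\to\infty$ by your Step 1 argument; then run an $\varepsilon$-argument, choosing $m$ first and $n$ afterwards. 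Since the theorem claims only convergence and no rate, this suffices. A cleaner variant avoids both the auxiliary process and the argument mismatch entirely: apply It\^o's formula to $\varphi_m^{(n)}(X_n(t)-X_0(t))$ directly and split $\sigma_n(s,X_n)-\sigma_0(s,X_0)$ into $\sigma_n(s,X_n)-\sigma_n(s,X_0)$ (matched with $\psi_m^{(n)}$, contributing $O(1/m)$) plus $\sigma_n(s,X_0)-\sigma_0(s,X_0)$ (a pure coefficient error along $X_0$, vanishing for fixed $m$ by dominated convergence), with the drift split analogously via $(\mathrm{Y}3_n)$; this is essentially the Kawabata--Yamada scheme that \cite{MPS09} follows.
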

\subsection{Functional and other limit theorems for the solutions of stochastic differential equations}
Let us consider an equation of the form
\begin{equation}\label{eq3.1}
d\xi_t=a(\xi_t)dt+dW_t,\;\;\; t>0, \;\;\; \xi_{0}=x_{0} ,
\end{equation}
with real measurable drift coefficient satisfying additional assumption: $\left|x\, a(x)\right|\le M$   for a certain constant $M$ and for all $x\in \real$.
We  use the following notation:
\begin{equation*} 
\psi (x,c)=\frac{1}{\log \left|x\right|} \int\limits _{0}^{x}a(v)dv-c.
\end{equation*}

\begin{theorem}[{\cite[Theorem 3.1, item 2.]{KKM}}]\label{th:KKM31}
Let $\xi $ be a solution to equation~(\ref{eq3.1}) and let

\begin{equation*} 
\mathop{\lim }\limits_{\left|x\right|\to +\infty } \psi (x,c_{0} )=0.
\end{equation*}
Then for $2c_{0} >-1$ the solution $\xi $ is stochastically unstable.
%, in other words
%$$\mathop{\lim }\limits_{t\to +\infty } \frac{1}{t} \int\limits _{0}^{t}{\pr}\left\{\left|\xi (s)\right|<N\right\}\,ds=0 $$
%for any constant $N>0$.
\end{theorem}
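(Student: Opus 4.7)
The plan is to analyze the one-dimensional diffusion \eqref{eq3.1} through its scale function and speed measure. Set $A(x)=\int_0^x a(v)\,dv$, so that the scale density is $s'(y)=e^{-2A(y)}$ and the speed density is $m(y)=2e^{2A(y)}$. The assumption $\psi(x,c_0)\to 0$ as $|x|\to\infty$ rewrites as the asymptotic $A(x)=c_0\log|x|+o(\log|x|)$; hence for every $\delta>0$ and all sufficiently large $|y|$,
\[
|y|^{-2c_0-\delta}\le s'(y)\le|y|^{-2c_0+\delta},\qquad |y|^{2c_0-\delta}\le \tfrac{1}{2} m(y)\le|y|^{2c_0+\delta}.
\]
The hypothesis $2c_0>-1$ therefore forces $\int_{\real} m(y)\,dy=+\infty$, which rules out positive recurrence of $\xi$.

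The next step is to split along the Feller boundary classification. If $2c_0>1$, the scale function is bounded at one of $\pm\infty$, so the time-changed process $s(\xi_t)$ is a bounded local martingale on $(s(-\infty),s(+\infty))$; by standard Feller theory, $|\xi_t|\to\infty$ almost surely, and $\ind_{\{|\xi_s|<N\}}\to 0$ a.s.\ follows at once. If instead $-1<2c_0\le 1$, then $s(\pm\infty)=\pm\infty$ but $m(\real)=\infty$, i.e.\ $\xi$ is null recurrent. For this case I would invoke the Hopf--Chacon--Ornstein ratio ergodic theorem for recurrent diffusions: for any bounded $f,g\in L^1(m)$ with $\int g\,dm>0$,
\[
\lim_{t\to\infty}\frac{\int_0^t f(\xi_s)\,ds}{\int_0^t g(\xi_s)\,ds}=\frac{\int f\,dm}{\int g\,dm}\quad\text{a.s.}
\]
Applied with $f=\ind_{[-N,N]}$ together with the standard null-recurrent fact $\frac{1}{t}\int_0^t g(\xi_s)\,ds\to 0$ a.s.\ for $g\in L^1(m)$ (a Darling--Kac-type sublinear growth of occupation times on the infinite invariant measure), this yields $\frac{1}{t}\int_0^t\ind_{|\xi_s|<N}\,ds\to 0$ a.s.

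In either case, since $\frac{1}{t}\int_0^t\ind_{|\xi_s|<N}\,ds$ is bounded by $1$, the bounded convergence theorem combined with Fubini gives
\[
\frac{1}{t}\int_0^t\pr\{|\xi_s|<N\}\,ds=\ex\!\left[\frac{1}{t}\int_0^t\ind_{|\xi_s|<N}\,ds\right]\longrightarrow 0,
\]
which is exactly the stated stochastic instability.

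The main obstacle is the null recurrent subcase $-1<2c_0\le 1$: the transient case reduces to bounded-martingale convergence plus the Feller test, but in the null recurrent regime one must invoke a nontrivial infinite ergodic theorem and verify integrability against the infinite speed measure. A clean alternative that avoids an explicit infinite ergodic theorem is to estimate the $\lambda$-potential $U_\lambda\ind_{[-N,N]}(x)=\ex_x\!\int_0^\infty e^{-\lambda s}\ind_{|\xi_s|<N}\,ds$ via the explicit Green's function of a one-dimensional diffusion (built from $s$ and $m$) and deduce sublinear growth of $\ex\int_0^t\ind_{|\xi_s|<N}\,ds$ by a Tauberian passage $\lambda\downarrow 0$. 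Throughout, the $o(\log|x|)$ correction in the asymptotic for $A$ has to be tracked carefully so that the sharp integrability thresholds at $\pm\infty$ are not violated.
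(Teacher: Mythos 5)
The paper does not actually prove this statement: Theorem~\ref{th:KKM31} is quoted in the appendix as an imported result from \cite{KKM}, so there is no in-paper argument to compare yours against. Judged on its own, your scale-function/speed-measure route is a legitimate and essentially complete way to prove it, and it is close in spirit to how such results are obtained in \cite{KKM} (there the analysis also passes through $f(x)=\int_0^x e^{-2\int_0^u a(v)\,dv}\,du$ and the asymptotics of $\int_0^x a(v)\,dv$). The hypothesis $2c_0>-1$ enters exactly where you put it, namely to force $\int_{\real}e^{2A(y)}\,dy=\infty$ and so exclude positive recurrence; after that, both remaining regimes (transience, giving $\abs{\xi_t}\to\infty$ a.s., and null recurrence, giving $\frac1t\int_0^t\ind_{\{\abs{\xi_s}<N\}}\,ds\to0$ a.s.\ by Hopf's ratio ergodic theorem on the infinite invariant measure) yield the conclusion, and bounded convergence transfers this to $\frac1t\int_0^t\pr\set{\abs{\xi_s}<N}\,ds$.

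Two points deserve tightening. First, your case split by the value of $c_0$ is not quite the right dichotomy: at $2c_0=1$ the $o(\log\abs{x})$ correction decides whether $s(\pm\infty)$ is finite, so the process may be either transient or null recurrent there; the split should be stated intrinsically as ``$s$ bounded at some boundary'' versus ``$s(\pm\infty)=\pm\infty$'', after which your two arguments cover all cases (also, when $2c_0>1$ the scale function is in fact bounded at \emph{both} ends, not just one). Second, you should record why the diffusion is regular and conservative under the standing assumptions ($\abs{xa(x)}\le M$ gives a bounded drift away from the origin, hence no explosion, and local integrability of $e^{\pm2A}$ is needed both for $s$, $m$ to make sense and for $\ind_{[-N,N]}\in L^1(m)$). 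Neither point is a genuine gap, but both are places where the argument as written glosses over the hypotheses it is using.
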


For investigating the behavior when $T\to +\infty$  of the distribution of the normalized random process $\xi _{T} (t)=\frac{\xi_{tT}}{\sqrt{T}}$, $t>0$, where $T$ is  parameter, we study the weak convergence to some limit  process $ \zeta $ in the following sense.

\begin{definition}\label{KKMdef2}
A family $\xi _{T}=\{\xi _{T} (t),\, t\geq 0\}$  of stochastic processes is said to converge weakly,
as  $T\to +\infty $, to a process $\zeta=\{\zeta (t),\, t\geq 0\}$  if, for any $L>0$, the measures $\mu _{T} [0,L]$,  generated
by the processes  $\xi _{T} (\cdot)$  on the interval $[0,L]$ converge weakly to the measure $\mu [0,L]$ generated by the process $\zeta (\cdot)$.
\end{definition}

\begin{remark} Since the processes $ \xi_{T}  $ are continuous with probability 1 as the solutions to It\^{o}'s stochastic differential equations, Definition~\ref{KKMdef2} is a definition of the weak convergence of the processes $ \xi_{T} $ to the continuous process $ \zeta $ in a uniform topology of the space of continuous functions.
 %(see~\cite{book11}, Chapter~IX, \S~1).
\end{remark}

\begin{theorem}[{\cite[Theorem 3.3(2)]{KKM}}]\label{th:KKM33}
Let $\xi $ be a solution to equation~(\ref{eq3.1}), and let there exist the constants $c_1$ and $c_2$ such that
\begin{equation*} 
\mathop{\lim }\limits_{\left|x\right|\to +\infty } \left[\frac{1}{x} \int\limits _{0}^{x}va(v)dv -\overline{c}(x)\right]=0,\quad     \overline{c}(x)=\left\{\begin{array}{l} {c_{1} ,\, \, \, x\geq 0,} \\ {c_{2} ,\, \, \, x<0.} \end{array}\right.
\end{equation*}

 If $2c_{1} >1$ and $2c_{2} <1,$ then the stochastic process
$\left|\xi_{tT}\right|T^{-\frac{1}{2} } $ converges weakly, as $T\to +\infty $, to the process $Y$, which is the solution of It\^{o}'s stochastic differential equation
\begin{equation*} 
Y_t^{2} =(2c+1)t+2\int\limits _{0}^{t}Y_s\,dW_s
\end{equation*}
for $c=c_{1}.$
\end{theorem}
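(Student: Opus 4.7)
The plan is to reduce this weak convergence statement to a direct application of Theorem \ref{th:KKM33} (Theorem 3.3(2) from \cite{KKM}) applied to the smoothed process $V^\varepsilon$ (which, in the notation of the final statement, is written as $Y^\varepsilon$). By \eqref{(vvv)} the process satisfies an SDE of the form $dV^\varepsilon_t = a_\varepsilon(V^\varepsilon_t)\,dt + dW_t$ with drift $a_\varepsilon(x) = c/\sqrt{x^2+\varepsilon^2}$, which is exactly the form required by Theorem \ref{th:KKM33}. The value of $c$ to be used is $c = 1$, dictated by the coefficient $3$ appearing in the limit equation \eqref{sdelimproc} through the relation $3 = 2c_1+1$.

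First I would verify the standing hypothesis $|x\,a_\varepsilon(x)| \le M$ needed for the KKM framework: since $|x|/\sqrt{x^2+\varepsilon^2} \le 1$ for all $x \in \real$, we may take $M = c$. This ensures the theorem is applicable.

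The main computation is to evaluate the averaging quantity appearing in Theorem \ref{th:KKM33}. Using the antiderivative $\int v/\sqrt{v^2+\varepsilon^2}\,dv = \sqrt{v^2+\varepsilon^2}$, one obtains
\[
\frac{1}{x}\int_0^x v\,a_\varepsilon(v)\,dv = \frac{c}{x}\left(\sqrt{x^2+\varepsilon^2} - \varepsilon\right).
\]
Letting $x \to +\infty$ gives the limit $c$, and letting $x \to -\infty$ gives $-c$ (since $\sqrt{x^2+\varepsilon^2}/x \to -1$). With $c = 1$, we identify the constants of Theorem \ref{th:KKM33} as $c_1 = 1$, $c_2 = -1$, so $\overline c(x)$ equals $1$ for $x \ge 0$ and $-1$ for $x < 0$, and the sign conditions $2c_1 = 2 > 1$ and $2c_2 = -2 < 1$ are both satisfied.

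Invoking Theorem \ref{th:KKM33} then delivers weak convergence of the rescaled process to the unique solution of $Y_t^2 = (2c_1+1)t + 2\int_0^t Y_s\,dW_s = 3t + 2\int_0^t Y_s\,dW_s$, which is precisely \eqref{sdelimproc}. I do not anticipate a substantive analytical obstacle; the only point of mild care is that Theorem \ref{th:KKM33} is stated for $|\xi_{tT}|T^{-1/2}$, while the statement writes $V^\varepsilon_{tT}/\sqrt{T}$ without absolute values. This is harmless: the strictly positive (and bounded) drift together with $V_0^\varepsilon = \sqrt{y_0} > 0$ makes $V^\varepsilon_{tT}$ grow linearly in $T$ on average, so under the $\sqrt{T}$-normalisation the process remains nonnegative in the scaling limit and coincides with its absolute value.
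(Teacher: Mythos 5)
Your proposal does not prove the stated theorem; it proves something else by \emph{using} the stated theorem. The statement in question is Theorem~\ref{th:KKM33} itself, i.e.\ the general functional limit theorem of \cite{KKM} for an arbitrary solution of \eqref{eq3.1} with a measurable drift satisfying $|x\,a(x)|\le M$ and the averaged condition $\frac1x\int_0^x v a(v)\,dv\to c_1$ (resp.\ $c_2$) as $x\to\pm\infty$. The paper offers no proof of this — it is imported as a black box in the Appendix — and your argument is circular with respect to it: you take the particular drift $a_\varepsilon(x)=c/\sqrt{x^2+\varepsilon^2}$ from \eqref{(vvv)}, verify that it satisfies the hypotheses ($|x a_\varepsilon(x)|\le c$, limits $\pm c$ of $\frac1x\int_0^x v a_\varepsilon(v)\,dv$, the sign conditions $2c_1>1$, $2c_2<1$), and then ``invoke Theorem~\ref{th:KKM33}'' to conclude. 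That establishes the unnumbered theorem of Section~\ref{sec:instability} (weak convergence of $Y^\varepsilon_{tT}/\sqrt{T}$ to the solution of \eqref{sdelimproc}), for which your computation indeed coincides with the paper's own one-line verification; it establishes nothing about Theorem~\ref{th:KKM33}.

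The missing content is the actual limit theorem: one must show that for \emph{every} drift in the stated class the family $\xi_{tT}T^{-1/2}$ is tight and that any limit point solves $Y_t^2=(2c_1+1)t+2\int_0^tY_s\,dW_s$. This requires applying It\^o's formula to $\xi_t^2$, controlling the drift contribution $\frac{2}{T}\int_0^{tT}\xi_s a(\xi_s)\,ds$ via the Ces\`aro hypothesis on $\frac1x\int_0^x v a(v)\,dv$ together with occupation-time/local-time estimates, proving tightness from the bound $|xa(x)|\le M$, and identifying the limit through a martingale-problem (or scale-function/speed-measure) characterisation, including the role of the asymmetry conditions $2c_1>1$, $2c_2<1$ in forcing the limit to live on the positive half-line. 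None of these steps appears in the proposal, so as a proof of the statement as given there is a genuine gap rather than a repairable slip.
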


\section*{Acknowledgments}
The first author is supported by The Swedish Foundation for Strategic Research, grant Nr. UKR24-0004, by Japan Science
and Technology Agency CREST, project reference number 811JPMJCR2115 and the ToppForsk project no. 274410 of the Research
Council of Norway with the title STORM: Stochastics for Time-Space Risk Models.
The second author is supported by the Research Council of Finland, decision number 359815.

\nocite*
\bibliographystyle{abbrv}
\bibliography{bessel}

\begin{thebibliography}{10}

\bibitem{Abramowitz}
M.~Abramowitz and I.~A. Stegun, editors.
\newblock {\em Handbook of mathematical functions with formulas, graphs, and
  mathematical tables}.
\newblock Dover Publications, Inc., New York, 1992.

\bibitem{Alfonsi15}
A.~Alfonsi.
\newblock {\em Affine diffusions and related processes: simulation, theory and
  applications}, volume~6 of {\em Bocconi \& Springer Series}.
\newblock Springer, Cham; Bocconi University Press, Milan, 2015.

\bibitem{BAK2013}
M.~Ben~Alaya and A.~Kebaier.
\newblock Asymptotic behavior of the maximum likelihood estimator for ergodic
  and nonergodic square-root diffusions.
\newblock {\em Stoch. Anal. Appl.}, 31(4):552--573, 2013.

\bibitem{Brigo}
D.~Brigo and F.~Mercurio.
\newblock A deterministic-shift extension of analytically-tractable and
  time-homogeneous short-rate models.
\newblock {\em Finance Stoch.}, 5(3):369--387, 2001.

\bibitem{CIR85}
J.~C. Cox, J.~E. Ingersoll, and S.~A. Ross.
\newblock A theory of the term structure of interest rates.
\newblock {\em Econometrica}, 53(2):385--407, 1985.

\bibitem{DMR22}
O.~Dehtiar, Y.~Mishura, and K.~Ralchenko.
\newblock Two methods of estimation of the drift parameters of the
  {C}ox--{I}ngersoll--{R}oss process: continuous observations.
\newblock {\em Comm. Statist. Theory Methods}, 51(19):6818--6833, 2022.

\bibitem{DiFrancesco22}
M.~Di~Francesco and K.~Kamm.
\newblock How to handle negative interest rates in a {CIR} framework.
\newblock {\em SeMA J.}, 79(4):593--618, 2022.

\bibitem{eins}
A.~Einstein.
\newblock {\"U}ber die von der molekularkinetischen {T}heorie der {W}{\"a}rme
  geforderte {B}ewegung von in ruhenden {F}l{\"u}ssigkeiten suspendierten
  {T}eilchen.
\newblock {\em Annalen der Physik}, 322(8):549--560, 1905.

\bibitem{eins-eng}
A.~Einstein.
\newblock {\em Investigations on the Theory of the {B}rownian Movement}.
\newblock Dover publications, New-York, 1956.
\newblock Edited with notes by R. F\"urth.

\bibitem{einsmol}
A.~Einstein and M.~von Smoluchowski.
\newblock {\em Untersuchungen {\"u}ber die {T}heorie der {B}rownschen
  {B}ewegung. {A}bhandlung über die {B}rownsche {B}ewegung und verwandte
  {E}rscheinungen}.
\newblock Ostwalds Klassiker der exakten Wissenschaften, Band 199. Verlag Harri
  Deutsch, 1997.

\bibitem{GJY03}
A.~G\"oing-Jaeschke and M.~Yor.
\newblock A survey and some generalizations of {B}essel processes.
\newblock {\em Bernoulli}, 9(2):313--349, 2003.

\bibitem{KaratzasShreve}
I.~Karatzas and S.~E. Shreve.
\newblock {\em Brownian motion and stochastic calculus}, volume 113 of {\em
  Graduate Texts in Mathematics}.
\newblock Springer-Verlag, New York, second edition, 1991.

\bibitem{neuron}
C.~Kelly and G.~J. Lord.
\newblock An adaptive splitting method for the {C}ox-{I}ngersoll-{R}oss
  process.
\newblock {\em Appl. Numer. Math.}, 186:252--273, 2023.

\bibitem{KKM}
G.~Kulinich, S.~Kushnirenko, and Y.~Mishura.
\newblock {\em Asymptotic analysis of unstable solutions of stochastic
  differential equations}, volume~9 of {\em Bocconi \& Springer Series}.
\newblock Bocconi University Press; Springer, 2020.

\bibitem{LipShir}
R.~S. Liptser and A.~N. Shiryayev.
\newblock {\em Theory of martingales}, volume~49 of {\em Mathematics and its
  Applications (Soviet Series)}.
\newblock Kluwer Academic Publishers Group, Dordrecht, 1989.

\bibitem{Maghsoodi96}
Y.~Maghsoodi.
\newblock Solution of the extended {CIR} term structure and bond option
  valuation.
\newblock {\em Math. Finance}, 6(1):89--109, 1996.

\bibitem{MiYu2}
Y.~Mishura, A.~Pilipenko, and A.~Yurchenko-Tytarenko.
\newblock Low-dimensional {C}ox-{I}ngersoll-{R}oss process.
\newblock {\em Stochastics}, 2024.
\newblock DOI:10.1080/17442508.2023.2300291.

\bibitem{MPS09}
Y.~Mishura, S.~Posashkova, and G.~Shevchenko.
\newblock Properties of solutions of stochastic differential equations with
  nonhomogeneous coefficients and non-{L}ipschitz diffusion.
\newblock {\em Theory Probab. Math. Statist.}, 79:117--126, 2009.

\bibitem{MIYu1}
Y.~Mishura and A.~Yurchenko-Tytarenko.
\newblock Standard and fractional reflected {O}rnstein-{U}hlenbeck processes as
  the limits of square roots of {C}ox-{I}ngersoll-{R}oss processes.
\newblock {\em Stochastics}, 95(1):99--117, 2023.

\bibitem{Okhrin23}
O.~Okhrin, M.~Rockinger, and M.~Schmid.
\newblock Distributional properties of continuous time processes: from {CIR} to
  {B}ates.
\newblock {\em AStA Adv. Stat. Anal.}, 107(3):397--419, 2023.

\bibitem{An-Atlas-of-Functions}
K.~Oldham, J.~Myland, and J.~Spanier.
\newblock {\em An atlas of functions}.
\newblock Springer, New York, second edition, 2009.

\bibitem{Orlando19}
G.~Orlando, R.~M. Mininni, and M.~Bufalo.
\newblock Interest rates calibration with a {CIR} model.
\newblock {\em The Journal of Risk Finance}, 20(4):370--387, 2019.

\bibitem{RevuzYor}
D.~Revuz and M.~Yor.
\newblock {\em Continuous martingales and {B}rownian motion}, volume 293 of
  {\em Grundlehren der mathematischen Wissenschaften}.
\newblock Springer-Verlag, Berlin, third edition, 1999.

\bibitem{smol1}
M.~Von~Smoluchowski.
\newblock Zur kinetischen {T}heorie der {B}rownschen {M}olekularbewegung und
  der {S}uspensionen.
\newblock {\em Annalen der physik}, 326(14):756--780, 1906.

\bibitem{YamadaWatanabe}
T.~Yamada and S.~Watanabe.
\newblock On the uniqueness of solutions of stochastic differential equations.
\newblock {\em J. Math. Kyoto Univ.}, 11:155--167, 1971.

\end{thebibliography}
\end{document}